%%%%%%%%%%%%%%%%%%%%%%%%%%%%%%%%%%%%%%%%%%%%%%%%%%%%%%%%%%%%%%%%%%%%%%%%%%%%%%%%
%2345678901234567890123456789012345678901234567890123456789012345678901234567890
%        1         2         3         4         5         6         7         8

\documentclass[letterpaper, 10 pt, conference]{ieeeconf}  % Comment this line out if you need a4paper

\IEEEoverridecommandlockouts                              % This command is only needed if 
                                                          % you want to use the \thanks command

\overrideIEEEmargins                                      % Needed to meet printer requirements.

%In case you encounter the following error:
%Error 1010 The PDF file may be corrupt (unable to open PDF file) OR
%Error 1000 An error occurred while parsing a contents stream. Unable to analyze the PDF file.
%This is a known problem with pdfLaTeX conversion filter. The file cannot be opened with acrobat reader
%Please use one of the alternatives below to circumvent this error by uncommenting one or the other
%\pdfobjcompresslevel=0
%\pdfminorversion=4

% See the \addtolength command later in the file to balance the column lengths
% on the last page of the document

% The following packages can be found on http:\\www.ctan.org
\usepackage{graphics} % for pdf, bitmapped graphics files
\usepackage{epsfig} % for postscript graphics files
\usepackage{mathptmx} % assumes new font selection scheme installed
\usepackage{times} % assumes new font selection scheme installed
\usepackage{amsmath} % assumes amsmath package installed
\usepackage{amssymb}  % assumes amsmath package installed
\usepackage{amsmath,amsfonts,mathrsfs,latexsym,amscd}
\usepackage{mathtools}
\usepackage{physics}
\usepackage[ruled]{algorithm2e}
\usepackage{graphicx}

\newcommand{\eps}{\varepsilon}
\DeclareMathOperator*{\argmin}{argmin}

\DeclareMathOperator{\zer}{zer}

\DeclareMathOperator{\dif}{d\!}

\DeclareMathOperator{\dom}{dom}

\DeclareMathOperator{\gr}{graph}
\DeclareMathOperator{\Int}{int}

\DeclareMathOperator{\res}{\mathsf{J}}

\DeclareMathOperator{\Id}{Id}

\newcommand{\ca}{\mathtt{a}}

%*************************************************************
%*****    Bold
%*************************************************************

\newcommand{\B}{\mathbb{B}}

%*************************************************************
%*****    Short Cuts
%******************************************************

\renewcommand{\iff}{\Leftrightarrow}

\renewcommand{\emptyset}{\varnothing}
\newcommand{\eqdef}{\triangleq}
\newcommand{\wlim}{\rightharpoonup}
%*************************************************************
%*****    Sets
%*******************************************************

\newcommand{\scrG}{\mathcal{G}}
\newcommand{\scrH}{\mathcal{H}}

\newcommand{\scrO}{\mathcal{O}}

\newcommand{\scrX}{\mathcal{X}}

\newcommand{\setC}{\mathsf{C}}

%*************************************************************
%*****    Probability
%**********************************************

\newcommand{\M}{{\mathsf{M}}}

%----------------------------------------------------------------------
%% Numbers
%----------------------------------------------------------------------

\newcommand{\R}{\mathbb{R}}

\newcommand{\N}{\mathbb{N}}

%----------------------------------------------------------------------
%% Topology
%----------------------------------------------------------------------

\DeclareMathOperator{\NC}{\mathsf{N}}

%*************************************************************
%*****    Operators
%*************************************************************

\newcommand{\opA}{\mathsf{A}}
\newcommand{\opB}{\mathsf{B}}

\newcommand{\opD}{\mathsf{D}}
\newcommand{\opT}{\mathsf{T}}
%----------------------------------------------------------------------
%% Algorithms
%----------------------------------------------------------------------
%\usepackage[ruled,vlined]{algorithm2e}
%\usepackage{algpseudocode}								% for algorithm macros
%\renewcommand{\algorithmiccomment}[1]{\hfill\texttt{\emph{\#}\,#1}}			% for algorithm comments

%----------------------------------------------------------------------
%% Theorem-like
%----------------------------------------------------------------------
%\theoremstyle{plain}
\newtheorem{theorem}{Theorem}

\newtheorem{lemma}[theorem]{Lemma}
\newtheorem{proposition}[theorem]{Proposition}
\newtheorem{fact}[theorem]{Fact}
%\newtheorem{problem}{Problem}
%----------------------------------------------------------------------
%% Definition-like
%----------------------------------------------------------------------
%\theoremstyle{definition}

%\newtheorem*{definition*}{Definition}
%\newtheorem*{problem*}{Problem}
\newtheorem{assumption}{Assumption}
%----------------------------------------------------------------------
%% Proofs
%----------------------------------------------------------------------
%\newenvironment{Proof}[1][Proof]{\begin{proof}[#1]}{\end{proof}}
%\smartqed	%This command right justifies \qed throughout the paper.
%\renewcommand{\qed}{\hfill{\footnotesize$\blacksquare$}}
\newcommand{\close}{\hfill{\footnotesize$\Diamond$}}

%----------------------------------------------------------------------
%% Remark-like
%----------------------------------------------------------------------
%\theoremstyle{remark}
\newtheorem{remark}{Remark}
\newtheorem{example}{Example}

%----------------------------------------------------------------------
%% Numbering
%----------------------------------------------------------------------
%\numberwithin{theorem}{section}
%\numberwithin{remark}{section}
%\numberwithin{example}{section}

\DeclarePairedDelimiter{\inner}{\langle}{\rangle}

\title{\LARGE \bf
Tikhonov regularized exterior penalty dynamics for constrained variational inequalities
}

\author{Mathias Staudigl and Siqi Qu$^{1}$% <-this % stops a space
\thanks{*This work was supported by the CSC and benefited from the support of the FMJH Program Gaspard Monge for optimization and operations research and their interactions with data science.}% <-this % stops a space
\thanks{$^{1}$ University of Mannheim, Department of Mathematics, B6, 
        {\tt\small mathias.staudigl@uni-mannheim.de,  \tt\small qu.siqi@uni-mannheim.de}}
        }%

\begin{document}

\maketitle
\thispagestyle{empty}
\pagestyle{empty}

%%%%%%%%%%%%%%%%%%%%%%%%%%%%%%%%%%%%%%%%%%%%%%%%%%%%%%%%%%%%%%%%%%%%%%%%%%%%%%%%
\begin{abstract}
Solving equilibrium problems under constraints is an important problem in optimization and optimal control. In this context an important practical challenge is the efficient incorporation of constraints. We develop a continuous-time method for solving constrained variational inequalities based on a new penalty regulated dynamical system in a general potentially infinite-dimensional Hilbert space. In order to obtain strong convergence of the issued trajectory of our method, we incorporate an explicit Tikhonov regularization parameter in our method, leading to a class of time-varying monotone inclusion problems featuring multiscale aspects. Besides strong convergence, we illustrate the practical efficiency of our developed method in solving constrained min-max problems. 
\end{abstract}

%%%%%%%%%%%%%%%%%%%%%%%%%%%%%%%%%%%%%%%%%%%%%%%%%%%%%%%%%%%%%%%%%%%%%%%%%%%%%%%%

\section{INTRODUCTION}
Let $\scrH$ be a real Hilbert space,  $\opA:\scrH\to2^{\scrH}$ a general maximally monotone operator, $\opD:\scrH\to\scrH$ a Lipschitz continuous and monotone operator, and $\setC\subseteq\scrH$ a closed convex set. Denote by $\NC_{\setC}(x)$ the outward normal cone to $\setC$. In this paper we are concerned with the study of a class of splitting methods for solving constrained variational equalities (VIs) of the form 
\begin{equation}\label{eq:MI}\tag{P}
0\in\Phi(x)= \opA(x)+\opD (x)+\NC_{\setC}(x).
\end{equation}
Problems of this form are ubiquitous in control and engineering. Important examples include inverse problems \cite{Bot:2014ab}, generalized Nash equilibrium problems \cite{Borgens:2018aa,BorKan21,YiPav19}, domain decomposition for PDEs \cite{Attouch:2010aa,Attouch:2008aa}, and many more. Motivated by these applications, we present an operator splitting method designed to approach a specific solution of \eqref{eq:MI} in a Hilbertian framework using a new dynamical system featuring multiscale aspects. Our construction relies on the assumption that the set constraint $\setC$ can be represented as the zero of another monotone operator $\opB:\scrH\to\scrH$, so that $\setC=\{x\in\scrH\vert \opB(x)=0\}$. While this setup might appear restrictive, it is in fact an almost universal constellation in equilibrium problems and convex optimization \cite{AminiYous19,Yousefian:2021aa,Benenati:2023aa}. Motivated by solving variational inequalities arising in game theory, we construct a dynamical system that exhibits strong convergence properties under mere monotonicity assumptions on the Lipschitzian operator $\opD$. This technical achievement allows us to approach large classes of generalized Nash equilibrium problems, without strong monotonicity assumption, as are commonly used in the perceived literature (see e.g. \cite{YiPav19,Sun:2021ab,Bianchi:2022aa}). In particular, our method allows us to apply the developed scheme directly to convex-concave saddle-point problems, which regained a lot of importance in the data science and machine learning community due to its important applications in reinforcement learning \cite{Omidshafiei:2017aa}, and generative adversarial networks \cite{Goodfellow:2020aa}. However, our approach applies to many more important instances of constrained VIs, which are of particular relevance in Generalized Nash equilibrium problems (GNEP).
\begin{example}[Generalized Nash equilibrium Problems]
The GNEP is a very important class of multi-agent optimization problems in which non-cooperatively agents aim to minimize their private cost functions, given the other agents' decisions and eventually also joint coupling constraints restricting the feasible set for each player. It has attracted enormous attention within the systems and control community as a general mathematical template to design distributed control strategies in complex networked systems \cite{Facchinei:2010aa,Grammatico:2017aa,Franci:2020aa,Cui:2021aa}. Consider $N$ agents where the local optimization problem of agent $i$ reads as  
$
\min_{x_{i}} J_{i}(x_{i},x_{-i})=\ell_{i}(x_{i},x_{-i})+r_{i}(x_{i})\quad \text{s.t.: } (x_{i},x_{-i})\in\setC.
$
The constraint $x=(x_{i},x_{-i})\in\setC$ represents a coupling constraint and for simplicity we assume this set of to be described by linear restrictions in the control variables of the agents: Denote by $\scrH=\prod_{i}\scrH_{i}$, we assume that the jointly feasible set of the game is described as $\setC=\{x\in\scrH\vert \sum_{i=1}^{N}T_{i}x_{i}=b\}$. Here $T_{i}:\scrH_{i}\to\scrG$ are bounded linear operators between two real Hilbert spaces $\scrH_{i}$ and $\scrG$, and $b\in\scrG$ is given. Assuming that the cost functions $\ell_{i}(\cdot,x_{-i})$ and $r_{i}(\cdot)$ are convex in the own control variable, we arrive at jointly convex GNEP for which we can use variational inequality methods to compute Nash equilibrium points. In this paper we follow penalty techniques to enforce the joint feasibility restrictions $x\in\setC$ \cite{facchinei2010penalty,sun2020continuous}. Consider the function $g(x)=\frac{1}{2}\norm{\sum_{i=1}^{N}T_{i}x_{i}-b}^{2}$ and $\opB(x)=\nabla g(x)$. Then, it is clear that $\setC=\zer(\opB)=\argmin_{x\in\scrH}g(x)$. Assuming additionally that the functions $\ell_{i}(\cdot,x_{-i})$ are continuously differentiable for all $x_{-i}$, we can define the operator $\opD(x):=[ \nabla_{x_{1}}\ell_{1}(x);\ldots;\nabla_{x_{N}}\ell_{N}(x)]$, as well as the set-valued operator  $\opA(x)=\partial r_{1}(x_{1})\times\cdots\times\partial r_{N}(x_{N})$. Finding a Nash equilibrium of the game can in this setting by casted as the search for a solution to the variational inequality \eqref{eq:MI} with the data just defined. \close
\end{example}
To deal with the constraints present in the general formulation \eqref{eq:MI}, we study the trajectories defined by the continuous-time dynamical system with components $(p,x)$, defined as 
\begin{equation}\label{eq:1storder}
\left\{\begin{split}
p(t)&=\res_{\lambda(t)A}(x(t)-\lambda(t)V(t,x(t)),\\
\dot{x}(t)&=p(t)-x(t)+\lambda(t)[V(t,x(t))-V(t,p(t))]
\end{split}\right.
\end{equation}
where $V(t,x):=\opD(x)+\eps(t)x+\beta(t)\opB(x)$ and $\lambda(t)>0$ acts as a step-size parameter. This dynamical system is a continuous-time version of Tseng's modified extragradient method \cite{Tseng:2000aa,Franci:2020aa}. The vector field defining this dynamics carries two parameters in its description. The addendum $\eps(t)x$ acts as a Tikhonov regularization on the trajectories, which will allow us to prove strong convergence (i.e. convergence in norm) to the least norm solution of the original problem \eqref{eq:MI}. The second addendum $\beta(t)\opB(x)$ acts as an exterior penalty to the method which forces the trajectory to move towards the set $\zer(\opB)=\setC$ over time. The main result (Theorem \ref{th:main}) of this note establishes the strong convergence of the trajectory $x(t)$ to the least norm solution of the underlying constrained VI \eqref{eq:MI} whenever the Tikhonov parameter $\eps(t)$ vanishes and the penalty parameter $\beta(t)$ grows to infinity, subject to some verifiable conditions. 
%%%%%%%%%%%%%%%%%%%%%%%%
\subsection{Related Literature}
Continuous time methods for variational inequalities have received a lot of attention in the last decades due to their close link to iterative methods via time-discretization methods, and the availability of Lyapunov analysis to assess their asymptotic properties. The dynamical system \eqref{eq:1storder} is an example of a modified extragradient method, which received a lot of attention recently within the machine learning community. For our developments, an important feature of this dynamical system is the presence of multi-scale aspects, embodied by the functions $\eps(t)$ (Tikhonov) and $\beta(t)$ (Penalty). The impact of such dynamical features in a continuous-time system are well-documented within optimization \cite{AttCzarPey11,Peypouquet:2012aa,Bot:2014,Bot:2016aa}. At the same time, Tikhonov regularization is a classical tool in variational analysis, that has been studied in connection with variational inequalities in many papers (see e.g. \cite{Cominetti:2008aa,Bot:2020aa}). The contribution we are making here is to explicitly study the interplay of both effects simultaneously. We give verifiable conditions that guarantee asymptotic convergence the least norm solution of the original problem using our exterior penalty approach. While a seemingly natural approach, we could not identify a similar analysis in the literature, and therefore belief that this constitutes a new results. Besides theoretical convergence statements, we report on the empirical performance of the method in solving convex-concave saddle point problems. 

\section{PRELIMINARIES}
\subsection{Notation}
Let $\scrH$ be a real Hilbert space with inner product $\inner{\cdot,\cdot}$ and associated norm $\norm{\cdot}$. The norm ball with center $x$ and radius $\eps$ is denoted by $\B(x,\eps)$. The symbols $\wlim$ and $\to$ denote weak and strong convergence, respectively. %For a function $f:\scrH\to\bar{\R}:=[-\infty,+\infty]$, we denote by $\dom(f)=\{x\in\scrH\vert f(x)<\infty\}$ its effective domain and say that $f$ is proper if $\dom(f)\neq\emptyset$ and $f(x)\neq-\infty$ for all $x\in\scrH$. If $f$ is convex, we let $\partial f(x)=\{u\in\scrH\vert f(y)\geq f(x)+\inner{y-x,v}\quad\forall y\in\scrH\}$ the subdifferential of $f$ at $x\in\dom(f)$. 
%
%Let $\setC\subseteq\scrH$ be a nonempty set. The indicator function of $\setC$, $\delta_{\setC}:\scrH\to\bar{\R}$, is the function satisfying $\delta_{\setC}(x)=0$ if $x\in\setC$ and $+\infty$ otherwise. The subdifferential of the indicator function is 
For a closed convex set $\setC\subset\scrH$ we define the normal cone as 
$\NC_{\setC}(x)=\{u\in\scrH\vert \inner{y-x,u}\leq 0\}$ if $x\in\setC$, and $\NC_{\setC}(x)=\emptyset$ else. For a set-valued operator $\M:\scrH\to 2^{\scrH}$ we denote by $\gr(\M)=\{(x,u)\in\scrH\times\scrH\vert u\in\M(x)\}$ its graph, $\dom(\M)=\{x\in\scrH\vert\M(x)\neq\emptyset\}$ its domain, and by $\M^{-1}:\scrH\to 2^{\scrH}$ its inverse, defined by  $(u,x)\in \gr(\M^{-1})\iff (x,u)\in\gr(\M).$ We let $\zer(\M)=\{x\in\scrH\vert 0\in \M(x)\}$ denote the set of zeros of $\M$. An operator $\M$ is $c$-strongly monotone if $c\geq 0$ and $\inner{x-y,u-v}\geq c\norm{x-y}^{2}$ for all $(x,u),(y,v)\in\gr(\M)$. In case $c=0$ holds, the operator $\M$ is said to be monotone. A monotone operator $\M$ is maximally monotone if there exists no proper monotone extension of the graph of $\M$ on $\scrH\times\scrH$. 
\begin{fact}\cite[Proposition 23.39]{BauCom16}
If $\M$ is maximally monotone, then $\zer(\M)$ is a convex and closed set.
\end{fact}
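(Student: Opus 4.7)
The plan is to handle closedness and convexity separately, both being standard consequences of the maximality hypothesis.

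For closedness, I rely on the fact that the graph of a maximally monotone operator is sequentially closed. If $(x_n)\subseteq\zer(\M)$ converges strongly to $x$, then $(x_n,0)\in\gr(\M)$ for every $n$, and for any fixed $(y,v)\in\gr(\M)$ monotonicity applied to the pairs $(x_n,0)$ and $(y,v)$ yields $\inner{x_n-y,-v}\geq 0$. Passing to the limit gives $\inner{x-y,-v}\geq 0$ for every $(y,v)\in\gr(\M)$, and maximality of $\M$ then forces $(x,0)\in\gr(\M)$, so $x\in\zer(\M)$.

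For convexity I would use the same maximality mechanism directly. Given $x_1,x_2\in\zer(\M)$ and $t\in[0,1]$, set $x_t=tx_1+(1-t)x_2$. For every $(y,v)\in\gr(\M)$, monotonicity at the two reference points gives $\inner{y-x_i,v}\geq 0$ for $i=1,2$, and taking the convex combination with weights $t$ and $1-t$ yields $\inner{y-x_t,v}\geq 0$ for every such pair. Maximality then forces $0\in\M(x_t)$, so $x_t\in\zer(\M)$. As a more structural alternative, I could invoke Minty's theorem: maximal monotonicity guarantees that the resolvent $\res_{\lambda\M}=(\Id+\lambda\M)^{-1}$ is single-valued, firmly nonexpansive, and defined on the whole of $\scrH$ for every $\lambda>0$, and a one-line manipulation shows $\zer(\M)=\{x\in\scrH\vert \res_{\lambda\M}(x)=x\}$; the claim then follows from the classical closedness and convexity of the fixed-point set of a nonexpansive self-map on a Hilbert space.

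I do not anticipate a serious obstacle here, since the entire argument rests on two standard facts about maximally monotone operators. The one step where \emph{maximality}, as opposed to mere monotonicity, genuinely does the work is the final implication in either approach: turning an inequality of the form $\inner{y-x,v}\geq 0$ holding uniformly over $\gr(\M)$ into honest membership $(x,0)\in\gr(\M)$. Everything else is continuity of the inner product and elementary convex combinations.
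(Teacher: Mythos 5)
Your proof is correct. The paper itself gives no argument for this statement --- it is quoted verbatim from \cite[Proposition 23.39]{BauCom16} --- so there is nothing internal to compare against; but both of your routes are sound, and it is worth noting how they relate to the surrounding text. Your first, direct argument is really the observation that
\begin{equation*}
\zer(\M)\;=\;\bigcap_{(u,w)\in\gr(\M)}\bigl\{p\in\scrH \;:\; \inner{u-p,w}\geq 0\bigr\},
\end{equation*}
an intersection of closed convex sets (each set is a closed half-space, or all of $\scrH$ when $w=0$); the ``$\supseteq$'' inclusion is exactly where maximality enters, and this identity is precisely Fact~\ref{fact:angle} of the paper, so your argument doubles as a proof of that fact. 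Your second route, via Minty's theorem and $\zer(\M)=\{x : \res_{\M}(x)=x\}$ together with the closedness and convexity of the fixed-point set of a (firmly) nonexpansive map, is the proof actually given in Bauschke--Combettes; it buys less self-containment but localizes the use of maximality entirely inside Minty's theorem. You are also right about where maximality is genuinely needed in the direct argument: mere monotonicity would only give that the half-space intersection \emph{contains} $\zer(\M)$, not that it equals it. One cosmetic remark: in the closedness step you only need weak sequential convergence of $(x_n)$, since $v$ is fixed in each inner product, so the same argument shows $\zer(\M)$ is weakly sequentially closed --- consistent with it being closed and convex.
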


\begin{fact}\label{fact:angle}
If $\M$ is maximally monotone, then $p\in\zer(\M)\iff \inner{u-p,w}\geq 0\quad\forall (u,w)\in\gr(\M).$
\end{fact}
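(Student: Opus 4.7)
The plan is to prove the two implications separately. The forward direction follows from plain monotonicity of $\M$, while the converse is exactly where the maximality hypothesis is invoked.

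For the implication $(\Rightarrow)$, suppose $p\in\zer(\M)$, so that $(p,0)\in\gr(\M)$. For any $(u,w)\in\gr(\M)$, applying the monotonicity inequality to the two pairs $(p,0)$ and $(u,w)$ yields $\inner{u-p,w-0}=\inner{u-p,w}\geq 0$, which is the claimed inequality. No maximality is needed for this direction.

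For the converse $(\Leftarrow)$, the strategy is to show that the pair $(p,0)$, if not already in $\gr(\M)$, would generate a proper monotone extension, contradicting maximality. Concretely, define $\tilde{\M}$ by $\gr(\tilde{\M})=\gr(\M)\cup\{(p,0)\}$. Verifying that $\tilde{\M}$ is monotone reduces to three checks: pairs both lying in $\gr(\M)$ satisfy the monotonicity inequality by hypothesis on $\M$; the reflexive pair at $(p,0)$ is trivial; and each crossed pair between $(p,0)$ and some $(u,w)\in\gr(\M)$ requires $\inner{u-p,w-0}\geq 0$, which is exactly the standing assumption (and the swapped inequality $\inner{p-u,0-w}\geq 0$ is the same statement). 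Hence $\tilde{\M}$ is a monotone operator whose graph contains $\gr(\M)$. Since $\M$ is maximally monotone, it admits no proper monotone extension, so $\gr(\tilde{\M})=\gr(\M)$, which forces $(p,0)\in\gr(\M)$, i.e. $0\in\M(p)$ and $p\in\zer(\M)$.

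The argument presents no real obstacle: the content of the statement is essentially the definition of maximal monotonicity specialized to the candidate pair $(p,0)$. The only step worth emphasizing is the symmetric use of the hypothesis when verifying monotonicity of the extension, but this is automatic because swapping the entries in the inner product negates both factors.
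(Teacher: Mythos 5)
Your proof is correct: the forward direction is plain monotonicity applied to the pairs $(p,0)$ and $(u,w)$, and the converse correctly packages the hypothesis as saying that $(p,0)$ is monotonically related to $\gr(\M)$, so maximality forces $(p,0)\in\gr(\M)$. The paper states this as a known fact without proof, and your argument is exactly the standard one it implicitly relies on, so there is nothing to object to.
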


The resolvent of $\M$, $\res_{\M}:\scrH\to 2^{\scrH}$ is defined by $\res_{\M}=(\Id+\M)^{-1}$. If $\M$ is maximally monotone, then $\res_{\M}$ is single-valued and maximally monotone. %In particular, if $\M=\NC_{C}$, the normal cone mapping for a closed convex set $C\subset\scrH$, then the resolvent gives rise to the orthogonal projection $\Pi_{C}$. 
%%%%%%%%%%%%%%%%%%%%%%
\subsection{Properties of perturbed solutions}
As already mentioned in the Introduction, our approach combines Tikhonov regularization and penalization. Therefore, we first have to understand the properties of sequences of solutions of the intermediate auxiliary problems. Specifically, let $(\eps,\beta)\in(0,\infty)\times(0,\infty)$ be a given pair of parameters. The trajectory of the dynamical system \eqref{eq:1storder} is going to track solutions of the auxiliary problem  
\begin{equation}\label{eq:MIauxiliary}
0\in \Phi_{\eps,\beta}(x)= (\opA+\opD+\eps\Id+\beta\opB)(x) 
\end{equation}
This VI involves the parametric family of mappings $V_{\eps,\beta}(x):=\opD(x)+\eps x+\beta\opB(x)$ for all $x\in\scrH$. 
\begin{assumption}\label{ass:data}
$\opA:\scrH\to 2^{\scrH}$ is maximally monotone. $\opD:\scrH\to\scrH$ is maximally monotone and $\frac{1}{\eta}$-Lipschitz continuous. $\opB:\scrH\to\scrH$ is maximally monotone, satisfies $\zer(\opB)=\setC$ and is $\frac{1}{\mu}$-Lipschitz.\close
\end{assumption}
Our first lemma collects basic regularity properties of this family of mappings. We skip the easy proof because of space limitations.
\begin{lemma}
For all $\eps,\beta>0$, we have 
\begin{itemize}
\item[(i)] $V_{\eps,\beta}$ is Lipschitz continuous with modulus $L_{\varepsilon,\beta}\eqdef \frac{1}{\eta}+\eps+\frac{\beta}{\mu}$;
\item[(ii)] If either $\dom(\opD)=\scrH$ or $\dom(\opD)\cap\Int\dom(\opB)\neq\emptyset$, then $V_{\eps,\beta}$ is maximally monotone and even strongly monotone. 
\end{itemize}
\end{lemma}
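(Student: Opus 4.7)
For part (i), the plan is straightforward: since $V_{\eps,\beta}(x)-V_{\eps,\beta}(y)=[\opD(x)-\opD(y)]+\eps(x-y)+\beta[\opB(x)-\opB(y)]$, I would apply the triangle inequality to obtain
\[
\norm{V_{\eps,\beta}(x)-V_{\eps,\beta}(y)}\leq \norm{\opD(x)-\opD(y)}+\eps\norm{x-y}+\beta\norm{\opB(x)-\opB(y)},
\]
and then invoke the Lipschitz moduli from Assumption~\ref{ass:data} for $\opD$ and $\opB$, together with the $\eps$-Lipschitz property of $\eps\Id$, to collect the stated modulus $L_{\eps,\beta}=\frac{1}{\eta}+\eps+\frac{\beta}{\mu}$.

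For part (ii), I would split the claim into the two properties. Strong monotonicity is the easier half: by the monotonicity of $\opD$ and $\opB$, and the $\eps$-strong monotonicity of the scaled identity,
\[
\inner{V_{\eps,\beta}(x)-V_{\eps,\beta}(y),x-y}\geq \eps\norm{x-y}^{2}\qquad\forall x,y\in\scrH,
\]
so $V_{\eps,\beta}$ is $\eps$-strongly monotone. For maximal monotonicity, the plan is to invoke Rockafellar's sum theorem twice. The operator $\eps\Id$ is maximally monotone with full domain $\scrH$, so $\opD+\eps\Id$ is maximally monotone (Rockafellar's domain condition is trivially satisfied since one summand has full domain). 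Adding $\beta\opB$ and using the stated hypothesis $\dom(\opD)=\scrH$ or $\dom(\opD)\cap\Int\dom(\opB)\neq\emptyset$, the domain qualification $\dom(\opD+\eps\Id)\cap\Int\dom(\beta\opB)\neq\emptyset$ holds, and Rockafellar's theorem yields that the full sum $V_{\eps,\beta}=\opD+\eps\Id+\beta\opB$ is maximally monotone.

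The only non-routine point is the invocation of Rockafellar's sum theorem in the potentially infinite-dimensional Hilbert setting, but the explicit domain qualification stated in the lemma is precisely what is needed, and in either alternative given by the hypothesis one of the summands has full domain (in the second case, because $\opB$ is Lipschitz on $\scrH$, hence $\Int\dom(\opB)=\scrH$), so no further constraint qualification is needed. The computation of the strong monotonicity constant is then immediate from the inner product expansion and requires no further argument.
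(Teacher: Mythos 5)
Your proof is correct. Note that the paper itself omits the proof of this lemma (``We skip the easy proof because of space limitations''), so there is nothing to compare against line by line; but your argument is exactly the routine one the authors evidently had in mind: triangle inequality for (i), inner-product expansion for the $\eps$-strong monotonicity, and a sum theorem for maximality. One small observation: under Assumption~\ref{ass:data} as stated, $\opD$ and $\opB$ are single-valued Lipschitz maps defined on all of $\scrH$, so $V_{\eps,\beta}$ is itself a continuous monotone operator with $\dom(V_{\eps,\beta})=\scrH$, and maximal monotonicity follows directly from the standard fact that a hemicontinuous monotone operator with full domain is maximally monotone --- no appeal to Rockafellar's sum theorem is needed at all (which also explains why the domain qualification in the lemma is vacuously satisfied in this setting). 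Your Rockafellar route is nevertheless valid and has the advantage of covering the more general situation the lemma's hypothesis seems to anticipate, where $\opD$ might not have full domain.
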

We see that the family of auxiliary problems \eqref{eq:MIauxiliary} becomes a sequence of strongly monotone inclusion problems. Therefore, for each parameter pair $(\eps,\beta)\in (0,\infty)\times[0,\infty)$, the set  $\zer(\Phi_{\eps,\beta})$ reduces to a singleton $\{\bar{x}(\eps,\beta)\}$. The next proposition establishes some consistency and regularity properties of this parametric family of solutions.
%%%%%%%%%%%%
\begin{proposition}\label{prop:asymptotics}
Let $(\eps_{n})_{n\in\N},(\beta_{n})_{n\in\N}$ be sequences in $(0,\infty)$ such that $\eps_{n}\to 0,\beta_{n}\to+\infty$. Then $\bar{x}(\eps_{n},\beta_{n})\to z\in\zer(\Phi)$ satisfying $\norm{z}=\inf\{\norm{x}:x\in\zer(\Phi)\}$. 
\end{proposition}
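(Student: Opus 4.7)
The plan is to establish the proposition via the classical Tikhonov-plus-penalty template adapted to our doubly-parameterized setting: derive a master inequality via monotonicity, deduce boundedness and asymptotic feasibility, identify weak cluster points as elements of $\zer(\Phi)$, and finally upgrade to strong convergence. I carry out four steps in order.

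\textbf{Step 1 (Master inequality).} Let $z$ denote the least-norm element of $\zer(\Phi)$, which exists and is unique since $\zer(\Phi)$ is closed and convex. Fix $a^{\ast}\in\opA(z)$ and $\nu^{\ast}\in\NC_{\setC}(z)$ with $a^{\ast}+\opD(z)+\nu^{\ast}=0$. Since $z\in\setC=\zer(\opB)$, $\opB(z)=0$. Subtracting this from the defining relation $a_{n}+\opD(\bar{x}_{n})+\eps_{n}\bar{x}_{n}+\beta_{n}\opB(\bar{x}_{n})=0$, pairing with $\bar{x}_{n}-z$, and invoking monotonicity of $\opA$ and $\opD$ yields
\begin{equation*}
\eps_{n}\langle\bar{x}_{n},\bar{x}_{n}-z\rangle+\beta_{n}\langle\opB(\bar{x}_{n}),\bar{x}_{n}-z\rangle\le\langle\nu^{\ast},\bar{x}_{n}-z\rangle\le\|\nu^{\ast}\|\,\dist(\bar{x}_{n},\setC),
\end{equation*}
the last inequality following from $\nu^{\ast}\in\NC_{\setC}(z)$ together with the splitting $\bar{x}_{n}-z=(\bar{x}_{n}-P_{\setC}(\bar{x}_{n}))+(P_{\setC}(\bar{x}_{n})-z)$.

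\textbf{Step 2 (Boundedness and feasibility).} Monotonicity of $\opB$ combined with $\opB(z)=0$ gives $\beta_{n}\langle\opB(\bar{x}_{n}),\bar{x}_{n}-z\rangle\ge 0$, while the Lipschitz bound $\|\opB(\bar{x}_{n})\|\le(1/\mu)\dist(\bar{x}_{n},\setC)$ (from $\opB(P_{\setC}(\bar{x}_{n}))=0$ and Assumption~1) controls the penalty in the reverse direction. A careful quadratic-completion argument combining these estimates with the master inequality extracts both uniform boundedness of $(\bar{x}_{n})$ and the convergence $\dist(\bar{x}_{n},\setC)\to 0$. I expect this to be the main technical obstacle: the crude bound from $\eps_{n}$-strong monotonicity alone only yields $\|\bar{x}_{n}\|=O(1/\eps_{n})$, so one must delicately couple the vanishing Tikhonov term with the diverging penalty term to obtain uniform estimates.

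\textbf{Step 3 (Weak limits lie in $\zer(\Phi)$).} By boundedness, extract $\bar{x}_{n_{k}}\wlim z^{\ast}$. Since $\dist(\bar{x}_{n_{k}},\setC)\to 0$ and $\setC$ is weakly closed, $z^{\ast}\in\setC$. To verify $z^{\ast}\in\zer(\Phi)$, apply Fact~\ref{fact:angle}: for every $(y,w)\in\gr\Phi$ with $w=b+\opD(y)+\zeta$, monotonicity of $\opA$, $\opD$, and $\opB$ (the last using $\opB(y)=0$) together with the inclusion give
\begin{equation*}
\langle y-\bar{x}_{n_{k}},w\rangle\ge\eps_{n_{k}}\langle\bar{x}_{n_{k}}-y,\bar{x}_{n_{k}}\rangle+\beta_{n_{k}}\langle\bar{x}_{n_{k}}-y,\opB(\bar{x}_{n_{k}})\rangle-\|\zeta\|\,\dist(\bar{x}_{n_{k}},\setC).
\end{equation*}
The first term vanishes (boundedness plus $\eps_{n_{k}}\to 0$), the second is nonnegative, and the third vanishes by Step~2; hence $\liminf_{k}\langle y-\bar{x}_{n_{k}},w\rangle\ge 0$. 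Passing to the weak limit gives $\langle y-z^{\ast},w\rangle\ge 0$ for every $(y,w)\in\gr\Phi$, so $z^{\ast}\in\zer(\Phi)$.

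\textbf{Step 4 (Identification and strong convergence).} Returning to the master inequality and using $\langle\bar{x}_{n_{k}},z\rangle\to\langle z^{\ast},z\rangle$, $\dist(\bar{x}_{n_{k}},\setC)\to 0$, boundedness, and the nonnegativity of the penalty term, a careful asymptotic analysis yields $\limsup_{k}\|\bar{x}_{n_{k}}\|^{2}\le\|z\|^{2}$. Combined with weak lower semicontinuity of the norm this forces $\|z^{\ast}\|\le\|z\|$; since $z^{\ast}\in\zer(\Phi)$, the least-norm property gives $\|z^{\ast}\|\ge\|z\|$, so $\|z^{\ast}\|=\|z\|$ and $z^{\ast}=z$ by uniqueness. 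All weak cluster points of $(\bar{x}_{n})$ coincide with $z$, so $\bar{x}_{n}\wlim z$. Finally, weak convergence together with $\|\bar{x}_{n}\|\to\|z\|$ upgrades to strong convergence $\bar{x}_{n}\to z$.
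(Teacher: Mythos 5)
Your overall architecture matches the paper's: a monotonicity (``master'') inequality, boundedness, feasibility of weak cluster points, membership of cluster points in $\zer(\Phi)$ via the Minty-type characterization, and an upgrade to strong convergence via norm convergence. The problem is that the two quantitative steps on which everything else rests --- uniform boundedness of $(\bar{x}_{n})$ and asymptotic feasibility --- are exactly the ones you do not carry out, and the mechanism you sketch for them would not work. In Step 2 you propose to extract $\dist(\bar{x}_{n},\setC)\to 0$ from the penalty term, but the only link you invoke between $\opB$ and the distance is the Lipschitz estimate $\norm{\opB(x)}\leq\frac{1}{\mu}\dist(x,\setC)$, which points in the wrong direction: it bounds $\norm{\opB(\bar{x}_{n})}$ by the distance, whereas you need a reverse (error-bound / metric-subregularity) inequality $\dist(\bar{x}_{n},\setC)\leq c\norm{\opB(\bar{x}_{n})}$, which is not among the assumptions. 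Likewise, once you keep the normal-cone term in the form $\norm{\nu^{\ast}}\dist(\bar{x}_{n},\setC)\leq\norm{\nu^{\ast}}\norm{\bar{x}_{n}-z}$, the master inequality only yields $\norm{\bar{x}_{n}-z}\leq\norm{z}+\norm{\nu^{\ast}}/\eps_{n}$, a bound that blows up as $\eps_{n}\to 0$; the ``quadratic completion'' you appeal to does not rescue this without the missing error bound. Since Steps 3 and 4 both consume boundedness and $\dist(\bar{x}_{n_{k}},\setC)\to 0$, the argument does not close as written.

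The paper resolves both points differently. For boundedness it drops the monotone pairings to arrive at $\eps_{n}\inner{\bar{x}_{n},z-\bar{x}_{n}}\geq 0$, whence $\norm{\bar{x}_{n}}\leq\norm{z}$ for \emph{every} $z\in\zer(\Phi)$ --- a bound by the infimum of norms over the solution set, uniform in $n$, which also makes the final least-norm identification a one-liner via weak lower semicontinuity of the norm. For feasibility it never touches $\dist(\cdot,\setC)$: it uses cocoercivity of $\opB$ to convert the penalty pairing into $\gamma\beta_{n_{j}}\norm{\opB(\bar{x}_{n_{j}})}^{2}\leq \scrO(1)$, concludes $\opB(\bar{x}_{n_{j}})\to 0$ strongly, and then places the weak limit in $\zer(\opB)=\setC$ by closedness of the graph of $\opB$ in the weak--strong topology. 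If you insist on your (admittedly more scrupulous) treatment of the normal-cone term via $\norm{\nu^{\ast}}\dist(\bar{x}_{n},\setC)$, you would need an additional compatibility hypothesis between $\NC_{\setC}$ and $\opB$ of Attouch--Czarnecki type; otherwise, work with $\norm{\opB(\bar{x}_{n})}$ rather than with the distance to $\setC$.
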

We next prove that $(\eps,\beta)\mapsto \bar{x}(\eps,\beta)$ is a locally Lipschitz continuous function. 
\begin{proposition}\label{prop:solutionmap}
The solution mapping $(\eps,\beta)\mapsto \bar{x}(\eps,\beta)$ is locally Lipschitz continuous. In particular, for all $t_{1}=(\eps_{1},\beta_{1})$ and $t_{2}=(\eps_{2},\beta_{2})$, we have 
\begin{equation}
\norm{\bar{x}(t_{2})-\bar{x}(t_{1})}\leq \frac{\ell}{\eps_{1}}\left(\abs{\beta_{2}-\beta_{1}}+\abs{\eps_{2}-\eps_{1}}\right),
\end{equation}
where $\ell\eqdef \max\{\sup_{x\in\B(0,\ca)}\norm{\opB(x)},\ca\}$, and $\ca\eqdef\inf\{\norm{x}:x\in\zer(\Phi)\}$. 
\end{proposition}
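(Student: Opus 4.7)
The plan is to directly compare the two defining inclusions for $\bar{x}(t_1)$ and $\bar{x}(t_2)$ by exploiting monotonicity of $\opA,\opD,\opB$, and to convert the mismatch in parameters into a linear perturbation that is absorbed by the strong monotonicity contributed by the Tikhonov term $\eps_1 \Id$. Write $x_i \eqdef \bar{x}(t_i) = \bar{x}(\eps_i,\beta_i)$. By definition, there exist $a_i \in \opA(x_i)$ with
\begin{equation*}
-a_i = \opD(x_i) + \eps_i x_i + \beta_i \opB(x_i),\qquad i=1,2.
\end{equation*}

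The first key step is to test the difference $a_1-a_2$ against $x_1-x_2$. Monotonicity of $\opA$ gives $\inner{a_1-a_2,x_1-x_2}\ge 0$; expanding $-(a_1-a_2)$ via the relation above yields
\begin{equation*}
\inner{\opD(x_1)-\opD(x_2)+\eps_1 x_1-\eps_2 x_2+\beta_1\opB(x_1)-\beta_2\opB(x_2),\, x_1-x_2}\le 0.
\end{equation*}
Now I split the Tikhonov and penalty contributions via the telescoping identities $\eps_1 x_1-\eps_2 x_2=\eps_1(x_1-x_2)+(\eps_1-\eps_2)x_2$ and $\beta_1\opB(x_1)-\beta_2\opB(x_2)=\beta_1(\opB(x_1)-\opB(x_2))+(\beta_1-\beta_2)\opB(x_2)$. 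Monotonicity of $\opD$ makes $\inner{\opD(x_1)-\opD(x_2),x_1-x_2}\ge 0$, and monotonicity of $\opB$ together with $\beta_1>0$ makes $\beta_1\inner{\opB(x_1)-\opB(x_2),x_1-x_2}\ge 0$. Dropping these nonnegative terms leaves
\begin{equation*}
\eps_1\norm{x_1-x_2}^2 \le -(\eps_1-\eps_2)\inner{x_2,x_1-x_2}-(\beta_1-\beta_2)\inner{\opB(x_2),x_1-x_2}.
\end{equation*}

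The second step is to apply Cauchy--Schwarz and divide by $\eps_1\norm{x_1-x_2}$ (the case $x_1=x_2$ being trivial), producing the bound
\begin{equation*}
\norm{x_1-x_2}\le \frac{1}{\eps_1}\bigl(\abs{\eps_1-\eps_2}\norm{x_2}+\abs{\beta_1-\beta_2}\norm{\opB(x_2)}\bigr).
\end{equation*}
The third step is to absorb $\norm{x_2}$ and $\norm{\opB(x_2)}$ into the single constant $\ell$. By the Tikhonov-type argument underlying Proposition~\ref{prop:asymptotics}, every perturbed solution $\bar{x}(\eps,\beta)$ lies in the norm ball $\B(0,\ca)$, where $\ca$ is the minimal norm among zeros of $\Phi$; hence $\norm{x_2}\le \ca\le \ell$ and $\norm{\opB(x_2)}\le \sup_{x\in\B(0,\ca)}\norm{\opB(x)}\le \ell$. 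Substituting these bounds yields exactly the claimed inequality, which also shows that the solution map is Lipschitz on any compact subset of $(0,\infty)\times(0,\infty)$ (the Lipschitz constant blows up only as $\eps_1\downarrow 0$).

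The main obstacle is the a priori estimate $\norm{\bar{x}(\eps,\beta)}\le \ca$ used in the third step: it is not automatic from the inclusion alone, because the normal cone contribution to $\zer(\Phi)$ prevents the standard "monotonicity centered at the min-norm solution" trick from terminating cleanly. The argument must exploit that $\opB(z)=0$ for every $z\in\setC=\zer(\opB)$, so that $\inner{\beta\opB(x)-\beta\opB(z),x-z}\ge 0$ neutralizes the penalty term when testing against the min-norm $z\in\zer(\Phi)$; combined with the $\eps$-strong monotonicity this forces $\norm{\bar{x}(\eps,\beta)}\le\norm{z}=\ca$. Everything else is routine algebra on the monotonicity inequality.
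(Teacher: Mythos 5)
Your proof is correct and takes essentially the same route as the paper: testing the two defining inclusions against $x_1-x_2$ via monotonicity of $\opA$, telescoping the $\eps$- and $\beta$-terms so that monotonicity of $\opD$ and $\opB$ eliminates everything except the $\eps_1$-strong-monotonicity term and the two perturbations, applying Cauchy--Schwarz, and absorbing $\norm{\bar{x}(\eps,\beta)}\le\ca$ and $\norm{\opB(\bar{x}(\eps,\beta))}\le\sup_{x\in\B(0,\ca)}\norm{\opB(x)}$ into $\ell$ using step (i) of the proof of Proposition~\ref{prop:asymptotics}. The only differences are cosmetic: the paper first records the two univariate Lipschitz estimates before the bivariate one, and its final bound carries $\norm{\opB\bar{x}(t_1)}$ where yours carries $\norm{\opB(x_2)}$, both of which are dominated by $\ell$.
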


\section{Penalty regulated dynamical systems}
As solutions of the proposed dynamical system \eqref{eq:1storder} we consider absolutely continuous functions  \cite{Sontag:2013aa}. Recall that a function $f:[0,b]\to\scrH$ (where $b>0$) is said to be absolutely continuous if there exists an integrable function $g:[0,b]\to\scrH$ such that 
$$
f(t)=f(0)+\int_{0}^{t}g(s)\dif s\quad\forall t\in[0,b].
$$
Given functions $\eps,\beta,\lambda:(0,\infty)\to(0,\infty)$, we set $V(t,x)\equiv V_{\eps(t),\beta(t)}(x)$, and introduce the reflection $R_{t}(x)\eqdef x-\lambda(t)V(t,x)$. Furthermore, define the vector field $f(t,x):[0,\infty)\times\scrH\to\scrH$ by
\begin{equation}\label{eq:Tseng}
f(t,x)\eqdef \left(R_{t}\circ \res_{\lambda(t)A}\circ R_{t}\right)(x)-R_{t}x
\end{equation}
The first-order dynamical system \eqref{eq:1storder} reads then compactly as 
\begin{equation}\label{eq:TsengDynamics}
\dot{x}(t)=f(t,x(t)),x(0)=x^{0}\in\scrH\text{ given. }
\end{equation}
We say that $x:[0,\infty)\to\scrH$ is a strong solution of \eqref{eq:TsengDynamics} if $t\mapsto x(t)$ is absolutely continuous on each interval $[0,b],0<b<\infty$ and  $\frac{\dif}{\dif t}x(t)=f(t,x(t))$ for almost every $t\in[0,+\infty)$. 
%
%The proof of convergence of the scheme is going to be a path-following argument. Given two differentiable functions $\eps(t),\beta(t)$, let 
%$$
%\bar{x}(t)=\bar{x}(\eps(t),\beta(t))\qquad\forall t\geq 0.
%$$
%From Proposition \ref{prop:solutionmap}, it follows
%$$
%\frac{\dif}{\dif t}\norm{\bar{x}(t)}\leq \frac{\ell}{\eps(t)}\left(\abs{\dot{\beta}(t)}+\abs{\dot{\eps}(t)}\right).
%$$
\subsection{Existence of solutions}
To prove existence and uniqueness of strong global solutions to \eqref{eq:TsengDynamics}, we use the Cauchy-Lipschitz theorem for absolutely continuous trajectories \cite{Sontag:2013aa}. 
%%%%%%%%%%%%%%%%%%%%%%%%
 \begin{assumption}\label{ass:parametercontinuous}
The functions $\lambda,\eps,\beta:[0,\infty)\to(0,\infty)$ are continuous on each interval $[0,b],0<b<\infty$. Additionally, $\eps(\cdot),\beta(\cdot)$ are continuously differentiable. The mapping $t\mapsto\eps(t)$ is monotonically decreasing and $t\mapsto\beta(t)$ is monotonically increasing. 
\end{assumption}
%%%%%%%%%%%%%%%%%%%%%%
The following Lemma establishes Lipschitz continuity of the vector field $f(t,x)$. The proof is similar to \cite[Lemma 5.1]{Bot:2020aa}, and thus omitted due to space restrictions.
\begin{lemma}
Assume that  $\lambda(t)\eps(t)<\frac{1}{\eta}+\frac{\beta(t)}{\mu}$ for all $t\geq 0$. For all $x,y\in\scrH$ and $t\geq 0$, we then have 
$$
\norm{f(t,x)-f(t,y)}\leq\kappa(t)\norm{x-y}
$$
where $\kappa(t)\equiv \sqrt{(1+2\lambda(t)L(t))\left(1+\lambda^{2}(t)L^{2}(t)-2\lambda(t)\eps(t)\right)}$ and $L(t)\equiv L_{\eps(t),\beta(t)}=\frac{1}{\eta}+\eps(t)+\frac{\beta(t)}{\mu}$. 
\end{lemma}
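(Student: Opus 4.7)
The plan is to compare the action of $f(t,\cdot)$ on two arbitrary points $x,y \in \scrH$ by tracking the auxiliary iterates appearing in the definition of the Tseng map. Fix $t \geq 0$ and abbreviate $\lambda \eqdef \lambda(t)$, $L \eqdef L(t)$, $\eps \eqdef \eps(t)$, and set $p_{x} \eqdef \res_{\lambda\opA}(R_{t}x)$, $p_{y} \eqdef \res_{\lambda\opA}(R_{t}y)$. A direct rearrangement of \eqref{eq:Tseng} gives the convenient identity $f(t,u) = R_{t}(p_{u}) - R_{t}(u)$, so introducing $\alpha \eqdef R_{t}(p_{x}) - R_{t}(p_{y})$ and $\beta \eqdef R_{t}(x) - R_{t}(y)$ we have $f(t,x) - f(t,y) = \alpha - \beta$, and it is natural to expand
\begin{equation*}
\norm{f(t,x) - f(t,y)}^{2} = \norm{\alpha}^{2} - 2\inner{\alpha,\beta} + \norm{\beta}^{2}.
\end{equation*}

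First I would bound the two square norms separately. Because $V(t,\cdot)$ is $\eps$-strongly monotone and $L$-Lipschitz by the preceding lemma, a direct expansion of $R_{t}u - R_{t}v = (u-v) - \lambda(V(t,u) - V(t,v))$ yields $\norm{R_{t}u - R_{t}v}^{2} \leq (1 - 2\lambda\eps + \lambda^{2} L^{2})\norm{u-v}^{2}$ for all $u,v$. Specializing to $(u,v) = (p_{x}, p_{y})$ bounds $\norm{\alpha}^{2}$ in terms of $\norm{p_{x}-p_{y}}^{2}$, and specializing to $(u,v)=(x,y)$ bounds $\norm{\beta}^{2}$ in terms of $\norm{x-y}^{2}$; nonexpansiveness of $\res_{\lambda\opA}$ further gives the key comparison $\norm{p_{x} - p_{y}} \leq \norm{\beta}$.

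The delicate term is the cross-product, and this is where I expect the main obstacle. Decomposing $-2\inner{\alpha,\beta} = -2\inner{p_{x} - p_{y}, \beta} + 2\lambda\inner{V(t,p_{x}) - V(t,p_{y}), \beta}$, the firm nonexpansiveness of $\res_{\lambda\opA}$ produces $\inner{p_{x} - p_{y}, \beta} \geq \norm{p_{x} - p_{y}}^{2}$, while Cauchy--Schwarz combined with Lipschitz continuity of $V(t,\cdot)$ and $\norm{p_{x} - p_{y}} \leq \norm{\beta}$ yields $\inner{V(t,p_{x}) - V(t,p_{y}), \beta} \leq L\norm{\beta}^{2}$. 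Assembling the three estimates gives
\begin{equation*}
\norm{f(t,x)-f(t,y)}^{2} \leq (\lambda^{2}L^{2} - 2\lambda\eps - 1)\norm{p_{x}-p_{y}}^{2} + (1 + 2\lambda L)\norm{\beta}^{2},
\end{equation*}
and invoking the step-size hypothesis $\lambda\eps < \frac{1}{\eta} + \frac{\beta}{\mu}$ to verify that the first coefficient is nonpositive, the residual $\norm{p_{x}-p_{y}}^{2}$ term drops and replacing $\norm{\beta}^{2}$ by $(1 - 2\lambda\eps + \lambda^{2} L^{2})\norm{x-y}^{2}$ recovers $\kappa(t)^{2}$. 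The hard part is precisely this last balancing act: a naive triangle inequality on $\alpha - \beta$ would already inflate the constant, and the tight value of $\kappa(t)$ relies on the negative term $-2\norm{p_{x}-p_{y}}^{2}$ generated by firm nonexpansiveness being large enough to absorb the strong-monotonicity surplus on $\norm{\alpha}^{2}$, which in turn requires the step-size assumption to render that surplus non-dominant.
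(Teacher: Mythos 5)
The paper does not actually print a proof of this lemma (it defers to \cite[Lemma~5.1]{Bot:2020aa}), but your decomposition is exactly the standard argument for Tseng-type fields, and everything up to and including the assembled inequality
\begin{equation*}
\norm{f(t,x)-f(t,y)}^{2}\leq(\lambda^{2}L^{2}-2\lambda\eps-1)\norm{p_{x}-p_{y}}^{2}+(1+2\lambda L)\norm{R_{t}x-R_{t}y}^{2}
\end{equation*}
is correct: the contraction estimate $\norm{R_{t}u-R_{t}v}^{2}\leq(1-2\lambda\eps+\lambda^{2}L^{2})\norm{u-v}^{2}$ from $\eps$-strong monotonicity and $L$-Lipschitz continuity of $V(t,\cdot)$, the use of firm nonexpansiveness of $\res_{\lambda\opA}$ on the cross term, and the comparison $\norm{p_{x}-p_{y}}\leq\norm{R_{t}x-R_{t}y}$ are all sound.

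The gap is the final sentence. You assert that the hypothesis $\lambda\eps<\tfrac{1}{\eta}+\tfrac{\beta}{\mu}$ makes the coefficient $\lambda^{2}L^{2}-2\lambda\eps-1$ nonpositive. It does not: what you need is $\lambda^{2}L^{2}\leq 1+2\lambda\eps$, whereas the stated hypothesis reads $\lambda\eps<L-\eps$ and places no upper bound whatsoever on $\lambda L$. Already $\eta=\mu=\beta=\lambda=1$, $\eps=0.1$ satisfies the hypothesis ($0.1<2$) while $\lambda^{2}L^{2}-2\lambda\eps-1=(2.1)^{2}-1.2=3.21>0$; in that regime, absorbing the residual term via $\norm{p_{x}-p_{y}}\leq\norm{R_{t}x-R_{t}y}$ only yields the strictly larger constant $\sqrt{(\lambda^{2}L^{2}+2\lambda L-2\lambda\eps)(1+\lambda^{2}L^{2}-2\lambda\eps)}$. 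The defect is not merely in your write-up: the lemma is false under its literal hypothesis. Take $\scrH=\R$, $\opA=0$, $\opD=\opB=\Id$ (so $\eta=\mu=1$), $\beta=1$, $\eps=10^{-3}$, $\lambda=5$; the hypothesis holds ($\lambda\eps=0.005<2$), $\res_{\lambda\opA}=\Id$, and $f(t,\cdot)$ is linear with slope $\lambda L(\lambda L-1)\approx 90$, while $\kappa(t)\approx\sqrt{21\cdot 101}<47$. The missing ingredient is therefore a genuine step-size restriction of the form $\lambda(t)L(t)\leq 1$ --- the condition that appears in Lemma~\ref{lem:Fbound} and implicitly in Theorem~\ref{th:main} through $\delta(t)>0$ --- under which $\lambda^{2}L^{2}\leq 1\leq 1+2\lambda\eps$ and your argument closes exactly as you describe. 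You should state that restriction explicitly rather than attempt to derive it from the lemma's hypothesis, which cannot be done. (A small cosmetic point: your symbol $\beta$ for $R_{t}x-R_{t}y$ collides with the penalty parameter $\beta(t)$ appearing in the hypothesis you invoke in the same sentence.)
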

Next, we establish a growth bound on the vector field. To that end, we define parameterized maps
$R_{\lambda,\eps,\beta}(x)\eqdef x-\lambda V_{\eps,\beta}(x)$ and $F_{\lambda,\eps,\beta}(x)\eqdef \left(R_{\lambda,\eps,\beta}\circ \res_{\lambda A}\circ R_{\lambda,\eps,\beta}\right)(x)-R_{\lambda,\eps,\beta}x$. Obviously, we have $f(t,x)\equiv F_{\lambda(t),\eps(t),\beta(t)}(x)$ and $R_{t}(x)\equiv R_{\lambda(t),\eps(t),\beta(t)}(x)$. Because of space limitations we omit the fairly simple proof of the next result.
\begin{lemma}\label{lem:Fbound}
For all $(\lambda,\eps,\beta)\in\R^{3}_{+}$ satisfying $\lambda<\frac{1}{1/\eta+\eps+\beta/\mu}$, and $x\in\scrH$, there exists a universal constant $C>0$ such that 
$\norm{F_{\lambda,\eps,\beta}(x)}\leq C(1+\norm{x}).$
\end{lemma}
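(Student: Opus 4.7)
The plan is to unfold the definition of $F_{\lambda,\eps,\beta}$ and bound each component using the triangle inequality together with the Lipschitz estimates from the preceding lemma and the nonexpansiveness of $\res_{\lambda\opA}$. Concretely, I would set $y \eqdef R_{\lambda,\eps,\beta}(x) = x - \lambda V_{\eps,\beta}(x)$ and $z \eqdef \res_{\lambda\opA}(y)$, so that
\begin{equation*}
F_{\lambda,\eps,\beta}(x) = R_{\lambda,\eps,\beta}(z) - R_{\lambda,\eps,\beta}(x) = (z - y) - \lambda V_{\eps,\beta}(z).
\end{equation*}
The triangle inequality then gives $\|F_{\lambda,\eps,\beta}(x)\| \leq \|z - y\| + \lambda \|V_{\eps,\beta}(z)\|$, and it suffices to control each summand linearly in $\|x\|$.

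For the second summand, Lipschitz continuity of $\opD$ and $\opB$ together with the identity $V_{\eps,\beta}(z) = \opD(z)+\eps z + \beta\opB(z)$ yields
\begin{equation*}
\|V_{\eps,\beta}(z)\| \leq \|\opD(0)\| + \beta \|\opB(0)\| + L\|z\|,
\end{equation*}
with $L = \tfrac{1}{\eta}+\eps+\tfrac{\beta}{\mu}$. For the first summand I would use nonexpansiveness of the resolvent around the point $0$: setting $a_{0} \eqdef \|\res_{\lambda\opA}(0)\|$, one has $\|z - \res_{\lambda\opA}(0)\| \leq \|y\|$, hence $\|z\| \leq \|y\| + a_{0}$ and $\|z - y\| \leq 2\|y\| + a_{0}$. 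Finally, the defining formula for $y$ gives $\|y\| \leq (1+\lambda L)\|x\| + \lambda(\|\opD(0)\|+\beta\|\opB(0)\|)$.

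Collecting all estimates and exploiting the standing assumption $\lambda L < 1$ to absorb the coefficient $\lambda L$ into numerical constants, we obtain
\begin{equation*}
\|F_{\lambda,\eps,\beta}(x)\| \leq (2 + \lambda L)\|y\| + (1+\lambda L) a_{0} + \lambda(\|\opD(0)\|+\beta\|\opB(0)\|),
\end{equation*}
and substituting the bound on $\|y\|$ produces $\|F_{\lambda,\eps,\beta}(x)\| \leq C(1 + \|x\|)$ for a constant $C$ depending only on $\lambda,\eps,\beta,\eta,\mu,\|\opD(0)\|,\|\opB(0)\|$ and $a_{0}$, but \emph{not} on $x$. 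Since the present lemma is invoked at fixed (though possibly time-dependent) parameters, this is the sense in which $C$ is universal in $x$.

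The proof itself involves no serious obstacle; the only delicate point is to avoid double counting when bounding $\|z-y\|$ and $\|z\|$, which is handled cleanly by passing through the reference point $\res_{\lambda\opA}(0)$ rather than trying to estimate $\|z-y\|$ directly. All other steps are straightforward applications of Lipschitz continuity of $\opD$, $\opB$ and the contraction property of the resolvent.
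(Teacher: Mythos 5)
The paper omits its own proof of this lemma, so there is nothing to compare line by line; your argument is the natural one and every step checks out: the decomposition $F_{\lambda,\eps,\beta}(x)=(z-y)-\lambda V_{\eps,\beta}(z)$ with $y=R_{\lambda,\eps,\beta}(x)$ and $z=\res_{\lambda\opA}(y)$, nonexpansiveness of the resolvent through the reference point $\res_{\lambda\opA}(0)$, and the linear-growth bounds on $\opD$ and $\opB$ are exactly what is needed. The one point worth tightening is the meaning of ``universal'': you end with a constant depending on $(\lambda,\eps,\beta)$ and on $a_{0}=\norm{\res_{\lambda\opA}(0)}$ and only claim independence of $x$, whereas the application (global existence of the trajectory while $\beta(t)\to\infty$) needs $C$ uniform over all admissible parameter triples. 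Your own estimates already deliver this: the hypothesis $\lambda(1/\eta+\eps+\beta/\mu)<1$ gives $\lambda\beta<\mu$ and $\lambda<\eta$, hence $\lambda\beta\norm{\opB(0)}\leq\mu\norm{\opB(0)}$ and $\lambda\norm{\opD(0)}\leq\eta\norm{\opD(0)}$, and $a_{0}$ is bounded uniformly for $\lambda\in(0,\eta)$ (fix any $(x^{*},u^{*})\in\gr(\opA)$; then $\res_{\lambda\opA}(x^{*}+\lambda u^{*})=x^{*}$ and nonexpansiveness yields $a_{0}\leq 2\norm{x^{*}}+\eta\norm{u^{*}}$). Substituting these into your final display produces a constant depending only on $\eta$, $\mu$, $\norm{\opD(0)}$, $\norm{\opB(0)}$ and one fixed point of $\gr(\opA)$, which is the universal constant the lemma asserts.
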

%\begin{proof}
%Fix an element $\bar{x}\in\dom(A)$. By Lemma \ref{lem:asymptotics}, for all $(\lambda,\eps,\beta)\in\Theta$, there exists a universal constant $C_{0}$ such that $\norm{F_{\lambda,\eps,\beta}(x)}\leq C_{0}$. Using now the triangle inequality, we get 
%\begin{align*}
%\norm{F_{\lambda,\eps,\beta}(x)}&\leq \norm{F_{\lambda,\eps,\beta}(x)-F_{\lambda,\eps,\beta}(\bar{x})}+\norm{F_{\lambda,\eps,\beta}(\bar{x})}\\
%&\leq \sqrt{6}\norm{x-\bar{x}}+C_{0}\\
%&\leq \sqrt{6}\norm{x}+C_{0}+\sqrt{6}\norm{\bar{x}}\\
%&\leq C\left(1+\norm{x}\right)
%\end{align*}
%where $C\eqdef\max\{\sqrt{6},\sqrt{6}\norm{\bar{x}},C_{0}\}$. 
%\end{proof}
%This Lemma delivers the linear growth bound for the time-dependent vector field $(t,x)\mapsto f(t,x)$. 
%
Existence and uniqueness of solutions to the dynamical system \eqref{eq:TsengDynamics} is now a straightforward consequence of the Picard-Lindelöf Theorem. 

\subsection{Convergence of trajectories}
In order to show the convergence, we start with some technical lemmata. To simplify the notation, we denote by $\bar{x}(t)=\bar{x}(\eps(t),\beta(t))$ the path of unique elements of $\zer(\Phi_t)$ where $\Phi_t\equiv \Phi_{\varepsilon(t),\beta(t)}$.
\lemma
\label{Lemma 3.7} 
	For almost all $t \in [0,+\infty)$, we have
	\begin{align*}
\norm{x(t)-p(t)}^2&-\norm{x(t)-\bar{x}(t)}^2\leq (1+2\lambda(t)\eps(t))\|p(t)-\bar{x}(t)\|^2\\
		&+2\lambda(t)\inner{ V(t,p(t))-V(t,x(t)),p(t)-\bar{x}(t)}
	\end{align*}
%\end{lemma}
\begin{proof}
From \eqref{eq:1storder}, we have
	$
	(\Id+\lambda(t)A)p(t) \ni x(t)-\lambda(t)V(t,x(t)).
	$
	It follows,
	\begin{align*}
	\Phi_{t}(p(t))&=A(p(t))+V(t,p(t)) \\
	& \ni \frac{x(t)-p(t)}{\lambda(t)}-V(t,x(t))+V(t,p(t))=-\frac{\dot{x}(t)}{\lambda(t)}.
	\end{align*}
	By Assumption \ref{ass:data}, the operators $\opD$ and $\opB$ are maximally monotone, which implies that $\Phi_{t}$ is $\eps(t)$-strongly monotone. Consequently,
	\begin{equation}\label{eq:Phi-Monotone}
	\langle-\frac{\dot{x}(t)}{\lambda(t)}-0,p(t)-\bar{x}(t)\rangle\geq \varepsilon(t)\|p(t)-\bar{x}(t)\|^2.
	\end{equation}
	
	Using this property, we see
		\begin{align*}
			&2\lambda(t)\varepsilon(t)\norm{p(t)-\bar{x}(t)}^2\\
			&\leq 2\inner{ x(t)-p(t),p(t)-\bar{x}(t)}\\
			&~~~+2\lambda(t)\langle V(t,p(t))-V(t,x(t)),p(t)-\bar{x}(t) \rangle \\
			&=-\|x(t)-p(t)\|^2+\|x(t)-\bar{x}(t)\|^2-\|p(t)-\bar{x}(t)\|^2\\
			&~~~+2\lambda(t)\langle V(t,p(t))-V(t,x(t)),p(t)-\bar{x}(t) \rangle.
		\end{align*}
\end{proof}
\begin{lemma}\label{Lemma 3.8}
	Let $t \mapsto x(t)$ be the unique strong global solution of \eqref{eq:1storder} for almost all $t \in [0,+\infty)$. Then
	\begin{align*}
	\langle x(t)-\bar{x}(t),\dot{x}(t) \rangle \leq& (\lambda(t)L(t)-1)\|x(t)-p(t)\|^2\\
	&-\lambda(t)\varepsilon(t)\|p(t)-\bar{x}(t)\|^2
	\end{align*}
\end{lemma}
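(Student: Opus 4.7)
The plan is to prove this by algebraically decomposing the inner product $\langle x(t)-\bar{x}(t),\dot{x}(t)\rangle$ via the splitting $x(t)-\bar{x}(t) = (x(t)-p(t)) + (p(t)-\bar{x}(t))$, so that
\begin{equation*}
\langle x(t)-\bar{x}(t),\dot{x}(t)\rangle = \langle x(t)-p(t),\dot{x}(t)\rangle + \langle p(t)-\bar{x}(t),\dot{x}(t)\rangle,
\end{equation*}
and then to bound each term separately by one of the two terms on the right-hand side of the claimed inequality. The first term will yield $(\lambda(t)L(t)-1)\|x(t)-p(t)\|^2$ using the explicit form of $\dot{x}(t)$ and Lipschitz continuity of $V(t,\cdot)$; the second term will yield $-\lambda(t)\eps(t)\|p(t)-\bar{x}(t)\|^2$ via the strong-monotonicity estimate \eqref{eq:Phi-Monotone} which has already been established in the proof of Lemma~\ref{Lemma 3.7}.

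For the first summand, I would substitute $\dot{x}(t) = p(t)-x(t)+\lambda(t)[V(t,x(t))-V(t,p(t))]$ to get
\begin{equation*}
\langle x(t)-p(t),\dot{x}(t)\rangle = -\|x(t)-p(t)\|^2 + \lambda(t)\langle x(t)-p(t),V(t,x(t))-V(t,p(t))\rangle.
\end{equation*}
Then Cauchy--Schwarz together with the Lipschitz estimate $\|V(t,x(t))-V(t,p(t))\|\leq L(t)\|x(t)-p(t)\|$ (from the lemma characterizing $V_{\eps,\beta}$) gives the bound $(\lambda(t)L(t)-1)\|x(t)-p(t)\|^2$.

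For the second summand, I would observe that a key computation inside the proof of Lemma~\ref{Lemma 3.7} already yields $\Phi_t(p(t))\ni -\dot{x}(t)/\lambda(t)$, and together with $\eps(t)$-strong monotonicity of $\Phi_t$ and the fact that $0\in\Phi_t(\bar{x}(t))$, this produces the inequality \eqref{eq:Phi-Monotone}, i.e.\
\begin{equation*}
\langle -\dot{x}(t)/\lambda(t),p(t)-\bar{x}(t)\rangle \geq \eps(t)\|p(t)-\bar{x}(t)\|^2,
\end{equation*}
which rearranges to $\langle p(t)-\bar{x}(t),\dot{x}(t)\rangle \leq -\lambda(t)\eps(t)\|p(t)-\bar{x}(t)\|^2$. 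Summing the two bounds yields the claim.

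I do not foresee a significant obstacle: the estimate \eqref{eq:Phi-Monotone} is essentially handed to us by the previous lemma's proof, and the Lipschitz bound on $V(t,\cdot)$ is already recorded. The only subtlety worth flagging is making sure the decomposition is done so that the cross terms combine cleanly, but this is straightforward because the splitting $x-\bar{x}=(x-p)+(p-\bar{x})$ is exactly aligned with how $\dot{x}$ decomposes into a ``$p-x$'' part and a ``$V$-difference'' part.
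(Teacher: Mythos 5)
Your proof is correct, but it takes a cleaner route than the paper's. The paper first expands $2\langle x(t)-\bar{x}(t),p(t)-x(t)\rangle$ via the three-point identity $2\langle u,v\rangle=\|u+v\|^2-\|u\|^2-\|v\|^2$, then invokes the \emph{statement} of Lemma~\ref{Lemma 3.7} to bound $\|\bar{x}(t)-p(t)\|^2-\|\bar{x}(t)-x(t)\|^2$, and finally recombines the two resulting $V$-cross-terms into $2\lambda(t)\langle V(t,x(t))-V(t,p(t)),x(t)-p(t)\rangle$ before applying Cauchy--Schwarz and Lipschitz continuity. Your decomposition $x(t)-\bar{x}(t)=(x(t)-p(t))+(p(t)-\bar{x}(t))$ reaches the same two ingredients --- the bound $\lambda(t)\langle V(t,x(t))-V(t,p(t)),x(t)-p(t)\rangle\leq\lambda(t)L(t)\|x(t)-p(t)\|^2$ and the strong-monotonicity inequality \eqref{eq:Phi-Monotone} --- directly, bypassing the statement of Lemma~\ref{Lemma 3.7} entirely and making the cancellation of cross terms transparent rather than emergent from a longer algebraic computation. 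What you lose is nothing logically (indeed \eqref{eq:Phi-Monotone} is an intermediate step inside the proof of Lemma~\ref{Lemma 3.7}, and is independently reused in the proof of Theorem~\ref{th:main}, so it must be established anyway); what you gain is a shorter argument in which each of the two terms on the right-hand side of the claimed inequality is visibly produced by exactly one of the two summands in your splitting. The only point to make explicit when writing this up is that \eqref{eq:Phi-Monotone} follows from the inclusion $-\dot{x}(t)/\lambda(t)\in\Phi_t(p(t))$, the fact that $0\in\Phi_t(\bar{x}(t))$, and the $\varepsilon(t)$-strong monotonicity of $\Phi_t$, which you do correctly flag.
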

\begin{proof}
	For almost all $t \in [0,+\infty)$, we have
	\begin{align*}
		&2\langle x(t)-\bar{x}(t),\dot{x}(t) \rangle\\
		&= 2\langle x(t)-\bar{x}(t),p(t)-x(t) \rangle\\
		&~~~ +2\langle x(t)-\bar{x}(t),\lambda(t)[V(t,x(t))-V(t,p(t))] \rangle\\
		&=\|\bar{x}(t)-p(t)\|^2-\|\bar{x}(t)-x(t)\|^2-\|x(t)-p(t)\|^2\\
		&~~~ +2\lambda(t)\langle x(t)-\bar{x}(t),V(t,x(t))-V(t,p(t)) \rangle
	\end{align*}
	combining with Lemma \ref{Lemma 3.7}, we get
$\|\bar{x}(t)-p(t)\|^2-\|\bar{x}(t)-x(t)\|^2\leq-\|x(t)-p(t)\|^2-2\lambda(t)\varepsilon(t)\|p(t)-\bar{x}(t)\|^2+2\lambda(t)\inner{V(t,p(t))-V(t,x(t)),p(t)-\bar{x}(t)}.$ Hence,
	\begin{align*}
		&2\langle x(t)-\bar{x}(t),\dot{x}(t) \rangle\\
		&\leq -2\|x(t)-p(t)\|^2-2\lambda(t)\varepsilon(t)\|p(t)-\bar{x}(t)\|^2\\
		&~~~+2\lambda(t)\langle V(t,p(t))-V(t,x(t)),p(t)-\bar{x}(t) \rangle\\
		&~~~+2\lambda(t)\langle x(t)-\bar{x}(t),V(t,x(t))-V(t,p(t)) \rangle\\
%		&\leq -2\|x(t)-p(t)\|^2-2\lambda(t)\varepsilon(t)\|p(t)-\bar{x}(t)\|^2\\
%		&~~~+2\lambda(t)\|x(t)-p(t)\|\cdot\| V(t,x(t))-V(t,p(t))\|\\
		&\leq -2(1-\lambda(t)L(t))\|x(t)-p(t)\|^2-2\lambda(t)\varepsilon(t)\|p(t)-\bar{x}(t)\|^2
  	\end{align*}
	the proof is completed.
\end{proof}

%\begin{assumption}\label{ass:parameters}
%The function $\varepsilon$ is absolutely continuous and $\varepsilon(t)$ decrease to 0 as $t \rightarrow +\infty$, and $\beta$ is absolutely continuous and $\beta(t)$ increase to $+\infty$ as $t \rightarrow +\infty$.
%\end{assumption}

\begin{theorem}
\label{th:main}
	Let $t \mapsto x(t)$ be the strong global solution of \eqref{eq:1storder}, where $t \geq 0$, for almost all $t \in [0,+\infty)$, assume that $\limsup_{\to\infty}\lambda(t)\beta(t)<\mu$, as well as that Assumptions \ref{ass:data} and \ref{ass:parametercontinuous} hold. Assume further that
	\begin{itemize}
		\item[(a)] $\lim_{t\to\infty}\delta(t)=0$ and $\lim\limits_{t \rightarrow +\infty}\int_{0}^{t}\delta(s)\dif s=\infty$, where $\delta(t)=\frac{1-\lambda(t)L(t)}{a^{2}(t)}$, and 
%\begin{equation}\label{eq:a}
$a(t)=2+\frac{1}{\lambda(t)\eps(t)}+\frac{1}{\eta\eps(t)}+\frac{\beta(t)}{\mu\eps(t)}.$
%\end{equation}
		 \item[(b)] $\lim_{t\to\infty}\frac{\dot{\eps}(t)}{\eps(t)\delta(t)}=0$ and $\frac{\dot{\beta}(t)}{\eps(t)\delta(t)}=\scrO(1)$ as $t\to\infty$.
	\end{itemize}
Then $x(t)\rightarrow \Pi_{\zer(\Phi)}(0)$ as $t \rightarrow +\infty$.
\end{theorem}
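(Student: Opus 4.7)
The plan is to prove norm convergence of $x(t)$ to $z:=\Pi_{\zer(\Phi)}(0)$ by tracking the regularized path $\bar x(t):=\bar x(\eps(t),\beta(t))$. I will show that the anchor error $E(t):=\tfrac12\|x(t)-\bar x(t)\|^2$ vanishes as $t\to\infty$, and then combine with Proposition~\ref{prop:asymptotics}, which gives $\bar x(t)\to z$, to conclude by the triangle inequality.

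First, differentiating $E$ and applying Lemma~\ref{Lemma 3.8} yields
\[
\dot E(t)\leq -\alpha(t)\|x(t)-p(t)\|^2-\lambda(t)\eps(t)\|p(t)-\bar x(t)\|^2+\|x(t)-\bar x(t)\|\,\|\dot{\bar x}(t)\|,
\]
with $\alpha(t):=1-\lambda(t)L(t)>0$. To absorb the cross term I would use $\|x-\bar x\|\leq\|x-p\|+\|p-\bar x\|$ followed by Young's inequality with dual weights $\alpha$ and $\lambda\eps$, consuming half of each negative quadratic. Combined with $\|x-p\|^2+\|p-\bar x\|^2\geq\tfrac12\|x-\bar x\|^2$, this produces the master inequality
\[
\dot E(t)\leq -\tfrac12\min\{\alpha(t),\lambda(t)\eps(t)\}\,E(t)+g(t),\qquad g(t):=\tfrac12\Big(\tfrac{1}{\alpha(t)}+\tfrac{1}{\lambda(t)\eps(t)}\Big)\|\dot{\bar x}(t)\|^2,
\]
where the a.e.\ bound $\|\dot{\bar x}(t)\|\leq \tfrac{\ell}{\eps(t)}(|\dot\eps(t)|+|\dot\beta(t)|)$ is inherited from Proposition~\ref{prop:solutionmap}.

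The decisive step is to calibrate both the decay coefficient and the noise against $\delta(t)$. The definition of $a(t)$ gives $\lambda\eps\geq 1/a$, so once $a(t)\geq 1$ one obtains $\min\{\alpha,\lambda\eps\}\geq \delta$ via $\delta=\alpha/a^2\leq\min\{\alpha,1/a\}\leq\min\{\alpha,\lambda\eps\}$. For the noise, assumption~(b) combined with Proposition~\ref{prop:solutionmap} yields $\|\dot{\bar x}\|=\mathcal{O}(\delta)$, while $1/\alpha+1/(\lambda\eps)=\mathcal{O}(1)+\mathcal{O}(a)$; therefore $g/\delta=\mathcal{O}(1/a^2)+\mathcal{O}(\alpha/a)\to 0$ as $a(t)\to\infty$. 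Having reduced the analysis to $\dot E\leq -\tfrac12\delta E+g$ with $\int_0^\infty \delta=\infty$ from assumption~(a) and $g/\delta\to 0$, a standard excitation argument via the variation-of-constants formula, splitting the integral at a time where $g/\delta\leq\eta$, delivers $E(t)\to 0$. Combining $\|x(t)-\bar x(t)\|\to 0$ with Proposition~\ref{prop:asymptotics} then produces $x(t)\to z$ strongly.

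The main obstacle lies in the third step: one has to juggle the five coupled scales $(\lambda,\eps,\beta,\alpha,a)$ simultaneously to extract both a uniform lower bound $\min(\alpha,\lambda\eps)\gtrsim\delta$ and an $o(\delta)$ ceiling on $g$, which is precisely why assumption~(b) takes the asymmetric form $\dot\eps/(\eps\delta)\to 0$ and $\dot\beta/(\eps\delta)=\mathcal{O}(1)$ rather than the symmetric statement one might naively expect. A subsidiary but genuine technicality is to upgrade the local Lipschitz bound of Proposition~\ref{prop:solutionmap} to the a.e.\ differentiability and derivative bound used in the master inequality.
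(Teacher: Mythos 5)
Your argument is correct and follows the same skeleton as the paper's proof: the anchor function $\tfrac12\norm{x(t)-\bar x(t)}^2$, the descent estimate of Lemma~\ref{Lemma 3.8}, the derivative bounds on the Tikhonov--penalty path inherited from Proposition~\ref{prop:solutionmap}, an integrating factor built from $\Delta(t)=\int_0^t\delta(s)\,\dif s$ with the forcing term shown to be $o(\delta)$, and finally the triangle inequality together with Proposition~\ref{prop:asymptotics}. The one genuine local difference is how the negative quadratics are converted into a multiple of the Lyapunov function. The paper discards the term $-\lambda(t)\eps(t)\norm{p(t)-\bar x(t)}^2$ and instead derives $\norm{p(t)-\bar x(t)}\leq (a(t)-1)\norm{x(t)-p(t)}$ from the $\eps(t)$-strong monotonicity of $\Phi_t$ (inequality \eqref{eq:Phi-Monotone}) and the Lipschitz continuity of $V_t$, giving $\norm{x-\bar x}\leq a(t)\norm{x-p}$ and hence the exact coefficient $\delta=\alpha/a^2$ in a \emph{linear} Gronwall inequality for $\varphi=\norm{x-\bar x}$. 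You instead keep both quadratics, absorb the cross term by Young's inequality, and use the parallelogram bound $\norm{x-p}^2+\norm{p-\bar x}^2\geq\tfrac12\norm{x-\bar x}^2$ together with $\delta\leq\min\{\alpha,\lambda\eps\}$ (valid since $a\geq 2$, $\alpha\leq 1$ and $\lambda\eps\geq 1/a$), which avoids the separate strong-monotonicity estimate at the cost of a quadratic inequality in $E$ with the extra factor $1/\alpha+1/(\lambda\eps)$ in the noise; your verification that this is still $o(\delta)$ is right, since $\delta/\alpha=1/a^2$ and $\delta/(\lambda\eps)\leq \alpha/a$ with $a(t)\to\infty$. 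Both routes ultimately lean on the same two hypotheses in the same way, so each buys essentially the same theorem; yours is marginally more self-contained, the paper's gives the sharper rate constant. Your closing caveat about upgrading the Lipschitz estimate of Proposition~\ref{prop:solutionmap} to a.e.\ differentiability of $t\mapsto\bar x(t)$ is well taken --- the paper's proof uses partial derivatives of $\bar x(\eps,\beta)$ without comment, so you are, if anything, more careful on this point.
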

\begin{proof}
	Define $\theta(t)=\frac{1}{2}\|x(t)-\bar{x}(t)\|^2$ where $t \geq 0$. From $\bar{x}(t)=\bar{x}(\varepsilon(t),\beta(t))$, we have
	$
	\dot{\theta}(t)=\langle x(t)-\bar{x}(t),\dot{x}(t)-\frac{\dd}{\dd t}\bar{x}(t)\rangle,
 	$
	where
%	$$
%	\frac{\dd}{\dd t}\bar{x}(t)=\frac{\partial}{\partial \varepsilon}\bar{x}(\varepsilon(t),\beta(t))\dot{\varepsilon}(t)+\frac{\partial}{\partial \beta}\bar{x}(\varepsilon(t),\beta(t))\dot{\beta}(t).
%	$$
	$
	\frac{\dd}{\dd t}\bar{x}(t)=\frac{\partial}{\partial \varepsilon}\bar{x}(\varepsilon(t),\beta(t))\dot{\varepsilon}(t)+\frac{\partial}{\partial \beta}\bar{x}(\varepsilon(t),\beta(t))\dot{\beta}(t).
	$
	Before proving the statement, we need to establish some estimates on $\frac{\dd}{\dd t}\bar{x}(t)$ first.
	%We split this preliminary analysis into two parts:
	
	(i) Estimate on $\frac{\partial}{\partial \varepsilon}\bar{x}(\varepsilon(t),\beta(t))$:
	
	Set $\varepsilon_1=\varepsilon, \varepsilon_2=\varepsilon+h$, where $h \rightarrow 0^+$, we obtain:
%	\begin{align*}
%	\|\bar{x}(\varepsilon_2,\beta)-\bar{x}(\varepsilon_1,\beta)\|&\leq \frac{|\varepsilon_2-\varepsilon_1|}{\varepsilon_2}\|\bar{x}(\varepsilon,\beta)\|\\
%	\|\bar{x}(\varepsilon+h,\beta)-\bar{x}(\varepsilon,\beta)\|&\leq \frac{\abs{h}}{\varepsilon+h}\|\bar{x}(\varepsilon,\beta)\|
%	\end{align*}
%which implies
\begin{align*}
&\lim\limits_{h \rightarrow 0}\frac{\|\bar{x}(\varepsilon+h,\beta)-\bar{x}(\varepsilon,\beta)\|}{|h|}=\|\frac{\partial}{\partial \varepsilon} \bar{x}(\varepsilon,\beta)\| \\
&\leq \lim\limits_{h \rightarrow 0}\frac{1}{\varepsilon+h}\|\bar{x}(\varepsilon,\beta)\| = \frac{\|\bar{x}(\varepsilon,\beta)\|}{\varepsilon}.
\end{align*}

(ii) Estimate on $\frac{\partial}{\partial \beta}\bar{x}(\varepsilon(t),\beta(t))$: As in part (i) we obtain

%set $\beta_1=\beta, \beta_2=\beta+h$, where $h \rightarrow 0^+$, we obtain:
%\begin{align*}
%	\|\bar{x}(\varepsilon,\beta_2)-\bar{x}(\varepsilon,\beta_1)\|&\leq \frac{|\beta_2-\beta_1|}{\varepsilon}\|\opB(\bar{x}(\varepsilon,\beta))\|\\
%	\|\bar{x}(\varepsilon,\beta+h)-\bar{x}(\varepsilon,\beta)\|&\leq \frac{|h|}{\varepsilon}\|\opB(\bar{x}(\varepsilon,\beta))\|
%\end{align*}
%which implies
$\lim\limits_{h \rightarrow 0}\frac{\|\bar{x}(\varepsilon,\beta+h)-\bar{x}(\varepsilon,\beta)\|}{|h|}\leq  \frac{\|\opB(\bar{x}(\varepsilon,\beta))\|}{\varepsilon}.$
%\|\frac{\partial}{\partial \beta} \bar{x}(\varepsilon,\beta)\| \\
%&\leq \lim\limits_{h \rightarrow 0} \frac{1}{\varepsilon}\|\opB(\bar{x}(\varepsilon,\beta))\|= \frac{\|\opB(\bar{x}(\varepsilon,\beta))\|}{\varepsilon}.
%\end{align*}
combining (i), (ii) with Lemma \ref{Lemma 3.8}, we get
%	\begin{equation}\label {e3.6}
	\begin{align*}
		\dot{\theta}& 
%=\langle x(t)-\bar{x}(t),\dot{x}(t)-\frac{\dd}{\dd t}\bar{x}(t)\rangle\\
		=\langle x(t)-\bar{x}(t),\dot{x}(t)\rangle-\dot{\varepsilon}(t)\langle x(t)-\bar{x}(t),\frac{\partial}{\partial \varepsilon}\bar{x}(\varepsilon(t),\beta(t))\rangle\\
		&~~~-\dot{\beta}(t)\langle x(t)-\bar{x}(t),\frac{\partial}{\partial \beta}\bar{x}(\varepsilon(t),\beta(t))\rangle\\
		&\leq -(1-\lambda(t)L(t))\|x(t)-p(t)\|^2-\varepsilon(t)\lambda(t)\|p(t)-\bar{x}(t)\|^2\\
		&~~~-\dot{\varepsilon}(t)\langle x(t)-\bar{x}(t),\frac{\partial}{\partial \varepsilon}\bar{x}(\varepsilon(t),\beta(t))\rangle\\
		&~~~-\dot{\beta}(t)\langle x(t)-\bar{x}(t),\frac{\partial}{\partial \beta}\bar{x}(\varepsilon(t),\beta(t))\rangle
	\end{align*}
%\end{equation}
%
%	where $\dot{\varepsilon}(t)<0,~\dot{\beta}(t)>0$, hence (i), (ii) completed.
%	
	As $\Phi_{t}=\opA+V(t,\cdot)$ is $\varepsilon(t)$-strongly monotone, \eqref{eq:Phi-Monotone} shows that 
\begin{align*}
	&\lambda(t)\varepsilon(t)\|p(t)-\bar{x}(t)\|^2 \\
	&\leq \langle x(t)-p(t)+\lambda(t)[V(t,p(t))-V(t,x(t))],p(t)-\bar{x}(t) \rangle,
\end{align*}
Using Cauchy-Schwarz, and the $L(t)$-Lipschitz continuity of $V_{t}$, we obtain 
\[
\norm{p(t)-\bar{x}(t)}\leq \big(\frac{1}{\lambda(t)\varepsilon(t)}+1+\frac{1}{\eta\varepsilon(t)}+\frac{\beta(t)}{\mu\varepsilon(t)}\big)\|x(t)-p(t)\|.
 \]
it follows $\|x(t)-\bar{x}(t)\|\leq \|x(t)-p(t)\|+\|p(t)-\bar{x}(t)\|\leq a(t)\|x(t)-p(t)\|.$
For almost all $t \geq 0$, we thus get $-\|x(t)-p(t)\|^2 \leq -\frac{1}{a^2(t)}\|x(t)-\bar{x}(t)\|^2$. 
%\begin{equation}\label {e3.9}
%	-\|x(t)-p(t)\|^2 \leq -\frac{1}{a^2(t)}\|x(t)-\bar{x}(t)\|^2.
%\end{equation}
Define $\varphi \eqdef\sqrt{2\theta}$, combining with the last bound on $\dot{\theta}$, we obtain
%	\begin{align*}
%	\dot{\theta}(t)&=\dot{\varphi}(t)\varphi(t)\\
%	&\leq -\frac{1-\lambda(t)L(t)}{a^2(t)}\|x(t)-\bar{x}(t)\|^2\\
%	&~~~-\dot{\varepsilon}(t)\| x(t)-\bar{x}(t)\|\cdot \|\frac{\partial}{\partial \varepsilon}\bar{x}(\varepsilon(t),\beta(t))\|\\
%	&~~~+\dot{\beta}(t)\| x(t)-\bar{x}(t)\|\cdot\|\frac{\partial}{\partial \beta}\bar{x}(\varepsilon(t),\beta(t))\|\\
%	&\leq -\frac{1-\lambda(t)L(t)}{a^2(t)}\varphi(t)^{2}-\dot{\varepsilon}(t)\varphi(t)\frac{\|\bar{x}(t)\|}{\eps(t)}\\
%	&~~~+\frac{\dot{\beta}(t)}{\varepsilon(t)}\varphi(t)\|\opB(\bar{x}(t))\|.
%\end{align*}
	\begin{align*}
	\dot{\theta}(t)=\dot{\varphi}(t)\varphi(t)&\leq -\frac{1-\lambda(t)L(t)}{a^2(t)}\varphi(t)^{2}-\dot{\varepsilon}(t)\varphi(t)\frac{\|\bar{x}(t)\|}{\eps(t)}\\
	&~~~+\frac{\dot{\beta}(t)}{\varepsilon(t)}\varphi(t)\|\opB(\bar{x}(t))\|.
\end{align*}
Define $\delta(t)\eqdef\frac{1-\lambda(t)L(t)}{a^2(t)} $ and the integrating factor $\Delta(t)\eqdef\int_{0}^{t}\delta(s)\dd s$, as well as $w(t):=\norm{\bar{x}(t)}-\frac{\dot{\beta}(t)}{\dot{\varepsilon}(t)}\norm{\opB(\bar{x}(t))}$. We can then continue from the previous display with 
$
\frac{\dd}{\dd t}\big(\varphi(t)\exp(\Delta(t))\big)\leq -\frac{\dot{\varepsilon}(t)}{\varepsilon(t)}\exp(\Delta(t))w(t).%\big(\|\bar{x}(t)\|-\frac{\dot{\beta}(t)}{\dot{\varepsilon}(t)}\|\opB(\bar{x}(t))\|\big)
$
Integrating both sides from 0 to $t$, it immediately follows
%\begin{equation}\label{e3.10}
$$
\varphi(t) \leq \exp(-\Delta(t))\bigg[\varphi(0)-\int_0^t\bigg(\exp(\Delta(s))\frac{\dot{\eps}(s)}{\eps(s)}w(s)\bigg)\dd s\bigg].
$$
%\end{equation}
If $t\mapsto \int_{0}^{t}\exp(\Delta(s))\frac{\dot{\varepsilon}(s)}{\varepsilon(s)}w(s)\dd s$ is bounded, then we immediately obtain from hypothesis (i) that $\varphi(t)\to 0$. Otherwise, we apply l'H\^{o}pital's rule to get 
\begin{align*}
\lim_{t\to\infty}& \exp(-\Delta(t))\int_{0}^{t}\exp(\Delta(s))\frac{\dot{\eps}(s)}{\eps(s)}w(s)\dd s\\
&=\lim_{t\to\infty}\frac{\exp(\Delta(t))\frac{\dot{\eps}(t)}{\eps(t)}w(t)}{\delta(t)\exp(\Delta(t))}=\lim_{t\to\infty}\frac{\frac{\dot{\eps}(t)}{\eps(t)}w(t)}{\delta(t)}.%\\
%&=\lim_{t\to\infty}\frac{\frac{\dot{\eps}(t)}{\eps(t)}a^{2}(t)w(t)}{1-\lambda(t)L(t)}
\end{align*}
Since $\setC=\zer(\opB)$, and $\opB$ is Lipschitz, we know that  $t\mapsto\norm{\opB(\bar{x}(t))}$ is bounded. Additionally, we know from the proof of Proposition \ref{prop:asymptotics} that $\norm{\bar{x}(t)}\leq\inf\{\norm{x}:x\in\zer(\Phi)\}$. Using conditions (a) and (b), we deduce that $\varphi(t)\to 0$ and therefore $\norm{x(t)-\bar{x}(t)}\to 0$. Using Proposition \ref{prop:asymptotics}, we conclude $\|x(t)-\Pi_{\zer(\Phi)}(0)\| \leq \|x(t)-\bar{x}(t)\|+\|\bar{x}(t)-\Pi_{\zer(\Phi)}(0)\|$. Hence, $x(t)\rightarrow \Pi_{\zer(\Phi)}(0)$ as $t \rightarrow +\infty$.
\end{proof}
In the remainder of this note, we give some concrete specifications for functions $\eps(t),\lambda(t)$ and $\beta(t)$ satisfying all conditions for Theorem \ref{th:main} to hold. By hypothesis, we have 
$\liminf_{t\to\infty}(1-\lambda(t)L(t))=1-\limsup_{t\to\infty}\lambda(t)L(t)>0$. Additionally, 
$$
a(t)=2+\frac{1}{\eps(t)}\left(\frac{1}{\lambda(t)}+\frac{1}{\eta}+\frac{\beta(t)}{\mu}\right)=\frac{\lambda(t)(\eps(t)+L(t))+1}{\lambda(t)\eps(t)},
$$
which implies $a(t)=\frac{L(t)}{\eps(t)}(1+\frac{\eps(t)}{L(t)}+\frac{1}{L(t)\lambda(t)})=\scrO(\beta(t)/\eps(t))$, using that $L(t)=\scrO(\beta(t))$. This in turn implies $\delta(t)=\frac{1-\lambda(t)L(t)}{a^{2}(t)}=\scrO(\eps^{2}(t)/\beta^{2}(t))$. Hence, $\lim_{t\to\infty}\delta(t)=0$, and for obtaining $\delta\notin L^{1}(\R_{+})$ it suffices to guarantee that $\int_{0}^{\infty}\frac{\eps^{2}(t)}{\beta^{2}(t)}\dd t=\infty$. Then, 
$
\frac{\dot{\eps}(t)}{\eps(t)\delta(t)}=\frac{\dot{\eps}(t)L^{2}(t)}{\eps^{3}(t)}\frac{(1+\frac{\eps(t)}{L(t)}+\frac{1}{(L(t)\lambda(t)})^{2}}{1-\lambda(t)L(t)}\\
=\frac{\dot{\eps}(t)\beta^{2}(t)}{\eps^{3}(t)}\scrO(1).$
It therefore suffices to have $\lim_{t\to\infty}\frac{\dot{\eps}(t)\beta^{2}(t)}{\eps^{3}(t)}=0$. By a similar argument, it is easy to see that $\frac{\dot{\beta}(t)}{\eps(t)\delta(t)}=\frac{\dot{\beta}(t)\beta(t)^{2}}{\eps(t)^{3}}\scrO(1)$. Therefore, it suffices to ensure that $\frac{\dot{\beta}(t)\beta(t)^{2}}{\eps(t)^{3}}$ is bounded. Finding such functions is not too difficult, as the next remark shows. 

\begin{remark}
Assume $\eps(t)=(t+b)^{-(r+q)}, \beta(t)=(t+b)^q$, where $q,r,b> 0$ are chosen such that $r+q>0$. Then $\frac{\eps^{2}(t)}{\beta^{2}(t)}=(t+b)^{-2(r+2q)}$, and consequently we need to impose the restriction $2q+r<1/2$ to ensure that $\delta\notin L^{1}(\R_{+})$. Additionally, we compute $\frac{\dot{\eps}(t)\beta(t)^{2}}{\eps(t)^{3}}=-(r+q)(t+b)^{2r+3q-1}$. This yields the restriction $2r+3q<1$. Finally, $\frac{\dot{\beta}(t)\beta(t)^{2}}{\eps^{3}(t)}=q(t+b)^{6q+3r-1}$, and to make this a bounded sequence, we need to impose the condition $2q+r\leq\frac{1}{3}$. These conditions together span a region of feasible parameters $r,q$ which is nonempty. 
\close
\end{remark}

\section{Numerical Examples}
To illustrate the empirical performance of our method we consider the saddle point problem 
\begin{align*}
  \min_{x_{1}\in\scrX_{1}}\max_{x_{2}\in\scrX_{2}}\inner{Q x_{1},x_{2}}+\inner{b&,x_{1}}-\inner{c,x_{2}}+g\norm{x_{1}}^{2}-f\norm{x_{2}}^{2}\\
\text{s.t.: } & x:=(x_{1},x_{2})\in\setC
\end{align*}
where the coupling constraint is a polyhedron $\setC=\argmin \frac{1}{2}\norm{T_{1}x_{1}+T_{2}x_{2}-t}^{2}$. This problem is adapted from \cite{Ouyang:2021aa}. The sets $\scrX_{i}\subseteq\R^{n_{i}}$ are individual restrictions, and $T_{i}:\R^{n_{i}}\to\R^{d}$ are linear mappings. Define $\opT=[T_{1};T_{2}]$ the stacked matrix of dimensions $d\times n,n=n_{1}+n_{2}$. The vector $t\in\R^{d}$ is a given right-hand side. To embed this problem into our formulation, we define $g(x_{1},x_{2})=\frac{1}{2}\norm{T_{1}x_{1}+T_{2}x_{2}-t}^{2}$, so that $\opB(x_{1},x_{2})=\opT^{\ast}(\opT x-t)$ is just the gradient of $g$. Clearly $\setC=\zer(B)$.

In Figure 1 we can see the development of $\|x(t)-p(t)\|$ when we define $q=0.2$ and various values of the Tikhonov term $(t+1)^{(r+q)}$. However, $\|x(t)-p(t)\|$ is the fixed point residual of the operator $\res_{\lambda A}(I-\lambda V)$. According to the convergence analysis, we get $\|x(t)-p(t)\| \to 0$ as $t \to +\infty$.
\vspace{-0.4cm}
\begin{figure}[htb]
\centering
\begin{minipage}{0.50\linewidth}
\includegraphics[scale=0.11]{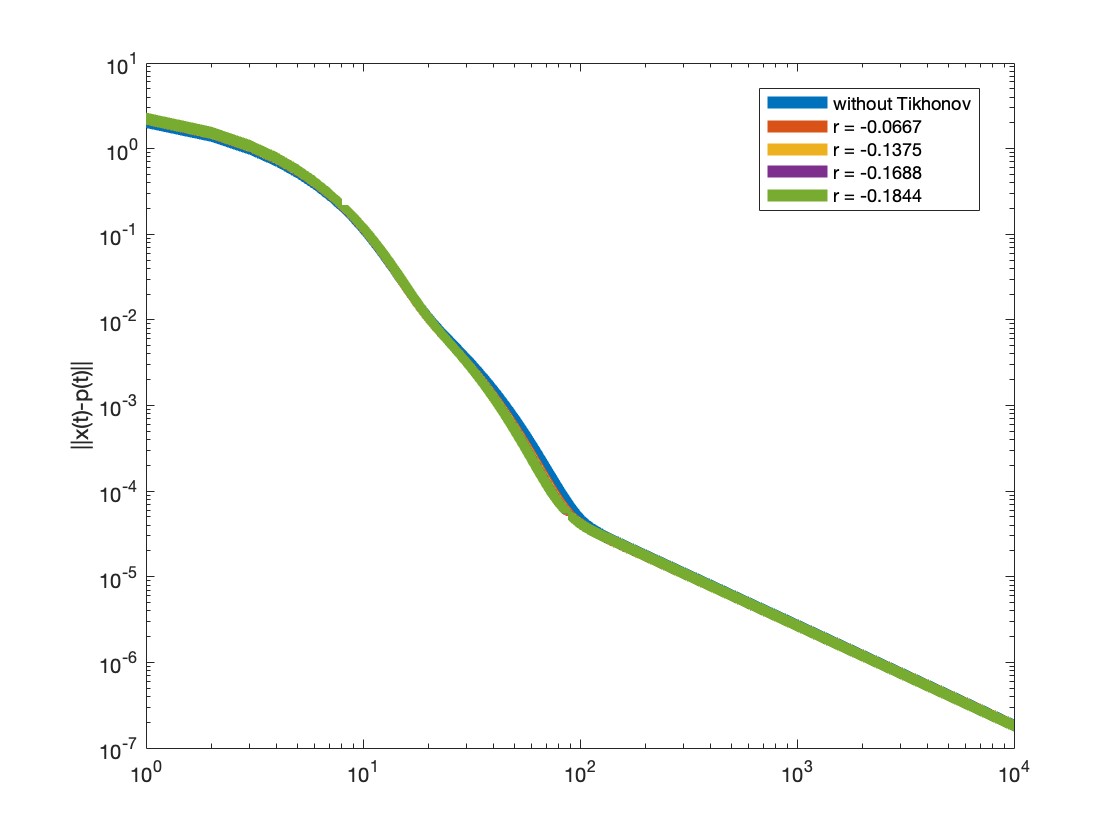}
\vspace{-0.9cm}
\caption{$\|x(t)-p(t)\|$}
\end{minipage}\hfill
\begin{minipage}{0.50\linewidth}
\centering
\includegraphics[scale=0.11]{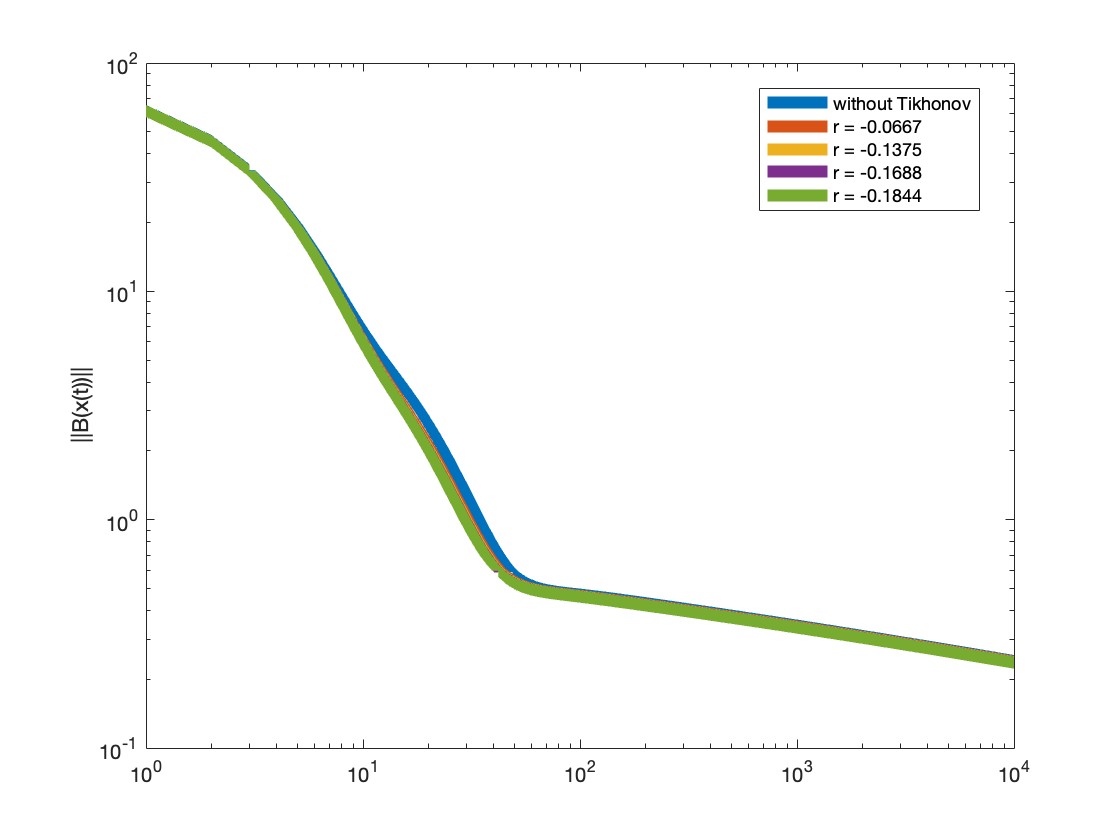}
\vspace{-0.9cm}
\caption{Feasibility Gap}
\end{minipage}
\end{figure}
\vspace{-0.42cm}

%\section{CONCLUSIONS}

\section*{APPENDIX}

\begin{proof}[Proof of Proposition \ref{prop:asymptotics}]
To simplify notation, we set $\bar{x}_{n}\equiv\bar{x}(\eps_{n},\beta_{n})$. for all $n\geq 1$. The proof proceeds in three steps:
\begin{itemize}
\item[(i)] The sequence $(\bar{x}_{n})_{n\in\N}$ is bounded. \\
Let $z\in\zer(\Phi)$ arbitrary. Then, there exists $\xi\in\NC_{\setC}(z)$ such that $-\opD(z)-\xi\in \opA(z)$. Since $\bar{x}_{n}\in\zer(\Phi_{\eps_{n},\beta_{n}})$, we have 
$-\eps_{n}\bar{x}_{n}-\beta_{n}\opB(\bar{x}_{n})-\opD(\bar{x}_{n})\in \opA(\bar{x}_{n})$ for all $n\geq 1.$ Since $\opA$ is maximally monotone, we have for all $n\geq 1$:
\begin{equation}\label{eq:ineq1}
\inner{-\opD(z)-\xi+\eps\bar{x}_{n}+\beta_{n}\opB(\bar{x}_{n})+\opD(\bar{x}_{n}),z-\bar{x}_{n}}\geq 0 
\end{equation}
Rearranging, and using the fact that $\opB(z)=0$ (since $z\in\setC)$ as well that $B$ is a monotone operator, it follows 
$$
\eps\inner{\bar{x}_{n},z-\bar{x}_{n}}\geq \beta\inner{\opB(z)-\opB(\bar{x}_{n}),z-\bar{x}_{n}}\geq 0 
$$
By Cauchy-Schwarz, we therefore obtain $\norm{\bar{x}_{n}}\leq \norm{z}$ for all $z\in\zer(\Phi)$. It follows $\sup_{n\geq 1}\norm{\bar{x}_{n}}\leq \inf\{\norm{x}:x\in\zer(\Phi)\}$. 
 
\item[(ii)] Weak accumulation points of $(\bar{x}_{n})$ are in $\setC$. \\
Since $(\bar{x}_{n})_{n\geq 1}$ is bounded, we can extract a weakly converging subsequence $\bar{x}_{n_{j}}\wlim \bar{x}$. Let $\eps_{n_{j}},\beta_{n_{j}}$ the corresponding subsequences of the parameters sequences $(\eps_{n},\beta_{n})$. Using \eqref{eq:ineq1} and the monotonicity of $\opD$, we see 
$$
\beta_{n_{j}}\inner{\opB(\bar{x}_{n_{j}}),z-\bar{x}_{n_{j}}}\geq -\eps_{n_{j}}\inner{\bar{x}_{n_{j}},z-\bar{x}_{n_{j}}}+\inner{\xi,z-\bar{x}_{n_{j}}}.
$$
%\begin{align*}
%\beta_{n_{j}}\inner{\opB(\bar{x}_{n_{j}}),z-\bar{x}_{n_{j}}}&\geq -\eps_{n_{j}}\inner{\bar{x}_{n_{j}},z-\bar{x}_{n_{j}}}+\inner{\xi,z-\bar{x}_{n_{j}}}\\
%&~~~+\inner{\opD(z)-\opD(\bar{x}_{n_{j}}),z-\bar{x}_{n_{j}}}\\
%&\geq -\eps_{n_{j}}\inner{\bar{x}_{n_{j}},z-\bar{x}_{n_{j}}}+\inner{\xi,z-\bar{x}_{n_{j}}}.
%\end{align*}
%so that 
$$
\inner{\opB(\bar{x}_{n_{j})},\bar{x}_{n_{j}}-z}\leq \frac{\eps_{n_{j}}}{\beta_{n_{j}}}\inner{\bar{x}_{n_{j}},z-\bar{x}_{n_{j}}}+\frac{1}{\beta_{n_{j}}}\inner{\xi,\bar{x}_{n_{j}}-z}.
$$
$\opB$ is cocoercive and $\opB(z)=0$, which implies that there exists a $\gamma>0$ such that
\[
\inner{\opB(\bar{x}_{n_{j}}),\bar{x}_{n_{j}}-z} \geq \gamma\norm{\opB(\bar{x}_{n_{j}})-\opB(z)}^{2}=\gamma\norm{\opB(\bar{x}_{n_{j}})}^{2}.
\]
Hence, 
\[
\norm{\opB(\bar{x}_{n_{j}})}^{2}\leq\frac{\eps_{n_{j}}}{\gamma\beta_{n_{j}}}\inner{\bar{x}_{n_{j}},z-\bar{x}_{n_{j}}}+\frac{1}{\gamma\beta_{n_{j}}}\inner{\xi,\bar{x}_{n_{j}}-z}.
\]
and from $\beta_{n_{j}}\to\infty$, we conclude $\lim_{j\to\infty}\norm{\opB(\bar{x}_{n_{j}})}=0$. By continuity, $\opB\bar{x}_{n_{j}}\to\opB(\bar{x})$, which implies $\opB(\bar{x})=0$. This is equivalent to $\bar{x}\in\setC$. 
\item[(iii)] Any weak accumulation point of $(\bar{x}_{n})_{n\in\N}$ is in $\zer(\Phi)$. \\
We use the characterization of the points in $\zer(\Phi)$ provided by Fact \ref{fact:angle}. Let $(u,w)\in\gr(\Phi)$ be arbitrary. Then, there exists $\xi\in\NC_{\setC}(u)$ such that $w-\xi-\opD(u)\in\opA(u).$ Moreover, for all $n\geq 1$, 
$$
-\eps_{n}\bar{x}_{n}-\beta_{n}\opB(\bar{x}_{n})-\opD(\bar{x}_{n})\in \opA(\bar{x}_{n}).
$$
Monotonicity of $\opA$ gives 
\[
\inner{-\eps_{n}\bar{x}_{n}-\beta_{n}\opB(\bar{x}_{n})-\opD(\bar{x}_{n})-w+\xi+\opD(u),\bar{x}_{n}-u}\geq 0
\]
Using the monotonicity of $\opD$, 
\begin{align*}
\inner{w,u-\bar{x}_{n}}&\geq \eps_{n}\inner{\bar{x}_{n},\bar{x}_{n}-u}+\inner{\beta_{n}\opB(\bar{x}_{n}),\bar{x}_{n}-u}\\
&~~~+\inner{\xi,u-\bar{x}_{n}}
\end{align*}
Since $\opB$ is monotone and $\opB(u)=0$, we also have $\inner{\opB(\bar{x}_{n}),\bar{x}_{n}-u}\geq 0$. Consequently, 
\[
\inner{w,u-\bar{x}_{n}}\geq \eps_{n}\inner{\bar{x}_{n},\bar{x}_{n}-u}+\inner{\xi,u-\bar{x}_{n}}.
\]
Therefore, $\liminf_{n\to\infty}\inner{w,u-\bar{x}_{n}}\geq \inner{\xi,u-\bar{x}}\geq 0$ using that $\xi\in\NC_{C}(u)$. Using Fact \ref{fact:angle}, this means that accumulation points of $\bar{x}_{n}$ are solutions of the original problem.

\item[(iv)] $\bar{x}_{n}\to \bar{x}=\argmin\{\norm{x}:x\in\zer(\Phi)\}$.\\
Let $\bar{x}$ be any weak limit of $(\bar{x}_{n})_{n}$. We know that $\bar{x}\in\zer(\Phi)$. Step (i) of the proof shows $\norm{\bar{x}}\leq\inf\{\norm{x}:x\in\zer(\Phi)\}$. The claim follows.
\end{itemize}
\end{proof}

\begin{proof}[Proof of Proposition \ref{prop:solutionmap}]
Fix $\beta>0$ and pick $\eps_{1},\eps_{2}>0$. Let $\opD_{\eps}:=\opD+\eps\Id$ and set $z_{1}=\bar{x}(\eps_{1},\beta)$ and $z_{2}=\bar{x}(\eps_{2},\beta)$. It follows 
$-\opD_{\eps_{1}}(z_{1})-\beta \opB(z_{1})\in \opA(z_{1})$, and $-\opD_{\eps_{2}}(z_{2})-\beta\opB(z_{2})\in\opA(z_{2})$. Since $\opA$ is maximally monotone, we have 
$$
\inner{z_{1}-z_{2},-\opD_{\eps_{1}}(z_{1})-\beta\opB(z_{1})+\opD_{\eps_{2}}(z_{2})+\beta\opB(z_{2})}\geq 0.
$$
Since $\opD$ and $\opB$ are both maximally monotone, we conclude $\inner{\eps_{1}z_{1}-\eps_{2}z_{2},z_{1}-z_{2}}\leq 0$. Assume first that $\eps_{2}>\eps_{1}$. Then
\[
0\geq \inner{\eps_{1}z_{1}-\eps_{2}z_{2},z_{1}-z_{2}}=\eps_{1}\norm{z_{1}-z_{2}}^{2}+(\eps_{1}-\eps_{2})\inner{z_{2},z_{1}-z_{2}}, 
\]
which means $(\eps_{2}-\eps_{1})\inner{z_{2},z_{1}-z_{2}}\geq\eps_{1}\norm{z_{1}-z_{2}}^{2}$. By Cauchy-Schwarz, 
$(\eps_{2}-\eps_{1})\norm{z_{2}}\cdot\norm{z_{1}-z_{2}}\geq \eps_{1}\norm{z_{1}-z_{2}}^{2},$ so that $\norm{z_{2}-z_{1}}\leq\frac{\eps_{2}-\eps_{1}}{\eps_{1}}\norm{z_{2}}.$ Next, assuming $\eps_{1}>\eps_{2}$. Then, interchanging the labels in the above inequality, we get 
$\norm{z_{2}-z_{1}}\leq\frac{\eps_{1}-\eps_{2}}{\eps_{2}}\norm{z_{1}}.$ Hence, $\norm{\bar{x}(\eps_{1},\beta)-\bar{x}(\eps_{2},\beta)}\leq \frac{\abs{\eps_{2}-\eps_{1}}}{\max\{\eps_{1},\eps_{2}\}}\max\{\norm{z_{1}},\norm{z_{2}}\}$. This shows that $\eps\mapsto \bar{x}(\eps,\beta)$ is locally Lipschitz. 

Now, fix $\eps>0$ and let $\beta_{1},\beta_{2}>0$. Denote $z_{1}=\bar{x}(\eps,\beta_{1})$ and $z_{2}=\bar{x}(\eps,\beta_{2})$. By definition, we have 
$$
-D_{\eps}z_{1}-\beta_{1} Bz_{1}\in Az_{1},\text{ and }-D_{\eps}z_{2}-\beta_{2} Bz_{2}\in Az_{2}.
$$
It follows $\beta_{2}\inner{\opB(z_{2}),z_{1}-z_{2}}-\beta_{1}\inner{\opB(z_{1}),z_{1}-z_{2}}\geq\eps\norm{z_{1}-z_{2}}^{2}.$ Assume that $\beta_{2}>\beta_{1}$. Then  
$(\beta_{2}-\beta_{1})\inner{\opB(z_{1}),z_{1}-z_{2}}+\beta_{2}\inner{\opB(z_{2})-\opB(z_{1}),z_{1}-z_{2}}\geq \eps\norm{z_{1}-z_{2}}^{2}$. Using the monotonicity of $\opB$, we conclude 
$$
\norm{z_{1}-z_{2}}\leq \frac{\beta_{2}-\beta_{2}}{\eps}\norm{\opB (z_{1})}.
$$
If $\beta_{1}>\beta_{2}$, we repeat the above computation, and obtain $\norm{z_{1}-z_{2}}\leq \frac{\beta_{1}-\beta_{2}}{\eps}\norm{\opB(z_{2})}.$ This yields $\norm{z_{1}-z_{2}}\leq \frac{\abs{\beta_{1}-\beta_{2}}}{\eps}\max\{\norm{\opB(z_{1})},\norm{\opB(z_{2})}\}$, which shows that $\beta\mapsto\bar{x}(\eps,\beta)$ is locally Lipschitz, for all $\eps>0$. 

Next, we show the Lipschitz continuity of the bivariate map $(\eps,\beta)\mapsto\bar{x}(\eps,\beta)$. Let $t_{1}\eqdef (\eps_{1},\beta_{1})$ and $t_{2}\eqdef(\eps_{2},\beta_{2})$ with corresponding solutions $\bar{x}(t_{1})$ and $\bar{x}(t_{2})$. By definition of these points, we have 
$
-V_{(\eps_{1},\beta_{1})}\bar{x}(t_{1})\in \opA\bar{x}(t_{1}),\text{ and }-V_{(\eps_{2},\beta_{2})}(\bar{x}(t_{2}))\in \opA\bar{x}(t_{2}).
$
Hence, 
$$
\inner{\opD_{\eps_{2}}\bar{x}(t_{2})+\beta_{2}\opB\bar{x}(t_{2})-\opD_{\eps_{1}}\bar{x}(t_{1})-\beta_{1}\opB\bar{x}(t_{1}),\bar{x}(t_{1})-\bar{x}(t_{2})}\geq 0.
$$
Rearranging, we obtain 
\begin{align*}
\inner{\eps_{2}\bar{x}(t_{2})-\eps_{1}\bar{x}(t_{1}),\bar{x}(t_{1})-\bar{x}(t_{2})}\geq&\beta_{1}\inner{\opB\bar{x}(t_{1}),\bar{x}(t_{1})-\bar{x}(t_{2})}\\
&+\beta_{2}\inner{\opB\bar{x}(t_{2}),\bar{x}(t_{2})-\bar{x}(t_{1})}.
\end{align*}
%This gives 
%\begin{align*}
%\eps_{1}\inner{\bar{x}(t_{2})-\bar{x}(t_{1}),\bar{x}(t_{1})-\bar{x}(t_{2})}\geq&\beta_{1}\inner{\opB\bar{x}(t_{1}),\bar{x}(t_{1})-\bar{x}(t_{2})}\\
%&+\beta_{2}\inner{\opB\bar{x}(t_{2}),\bar{x}(t_{2})-\bar{x}(t_{1})}\\ & -(\eps_{2}-\eps_{1})\inner{\bar{x}(t_{2}),\bar{x}(t_{1})-\bar{x}(t_{2})}.
%\end{align*}
Then,
%\begin{align*}
%\eps_{1}\norm{\bar{x}(t_{1})-\bar{x}(t_{2})}^{2}&\leq\beta_{1}\inner{B\bar{x}(t_{1}),\bar{x}(t_{2})-\bar{x}(t_{1})}\\
%&~~~+\beta_{2}\inner{B\bar{x}(t_{2}),\bar{x}(t_{1})-\bar{x}(t_{2})}\\
%&~~~+(\eps_{2}-\eps_{1})\inner{\bar{x}(t_{2}),\bar{x}(t_{1})-\bar{x}(t_{2})}\\
%&\leq \abs{\beta_{2}-\beta_{1}}\norm{B\bar{x}(t_{1})}\cdot\norm{\bar{x}(t_{2})-\bar{x}(t_{1})}\\
%&~~~+\abs{\eps_{2}-\eps_{1}}\norm{\bar{x}(t_{2})}\cdot\norm{\bar{x}(t_{2})-\bar{x}(t_{1})}.
%\end{align*}
\begin{align*}
\eps_{1}\norm{\bar{x}(t_{1})-\bar{x}(t_{2})}^{2}&\leq \abs{\beta_{2}-\beta_{1}}\norm{\opB\bar{x}(t_{1})}\cdot\norm{\bar{x}(t_{2})-\bar{x}(t_{1})}\\
&~~~+\abs{\eps_{2}-\eps_{1}}\norm{\bar{x}(t_{2})}\cdot\norm{\bar{x}(t_{2})-\bar{x}(t_{1})}.
\end{align*}
It follows 
$$
\norm{\bar{x}(t_{2})-\bar{x}(t_{1})}\leq \frac{\abs{\beta_{2}-\beta_{1}}}{\eps_{1}}\norm{\opB\bar{x}(t_{1})}+\frac{\abs{\eps_{2}-\eps_{1}}}{\eps_{1}}\norm{\bar{x}(t_{2})}.
$$
From the proof of Step (i) of the proof of Proposition \ref{prop:asymptotics}, we deduce that $\norm{\bar{x}(\eps,\beta)}\leq\inf\{\norm{x}:x\in\zer(\Phi)\}\eqdef \ca$. Hence, defining 
$\ell\eqdef \max\{\sup_{x\in\B(0,\ca)}\norm{\opB x},\ca\}$, the claim follows.
\end{proof}

%\bibliographystyle{IEEEtran}
%\bibliography{IEEEabrv,penaltydynamics}

\end{document}